\documentclass[11pt]{amsart}

\usepackage{amssymb}
\usepackage{amsmath}
\usepackage{amsfonts}
\usepackage{graphicx}
\usepackage{amsthm}
\usepackage{enumerate}
\usepackage[mathscr]{eucal}
\usepackage{mathrsfs}
\usepackage{verbatim}
\usepackage{xcolor}
\usepackage{hyperref}
\usepackage{wrapfig}
\usepackage{color}


\makeatletter
\@namedef{subjclassname@2010}{%
  \textup{2010} Mathematics Subject Classification}
\makeatother

\numberwithin{equation}{section}

\theoremstyle{plain}
\newtheorem{theorem}{Theorem}[section]
\newtheorem{lemma}[theorem]{Lemma}
\newtheorem{proposition}[theorem]{Proposition}

\newtheorem{defn}[theorem]{Definition}

\theoremstyle{plain}

\numberwithin{equation}{section}

\theoremstyle{remark}


\DeclareMathOperator{\area}{Area}

\DeclareMathOperator{\rank}{rank}

\begin{document}
\date{\today}

\title[Surface with prescribed increasing spectrum and area]
{Prescription of finite Dirichlet eigenvalues and area on surface with boundary}

\author{Xiang He}
\thanks{Partially supported by  National Key R and D Program of China 2020YFA0713100, and
by NSFC no. 12171446, 11721101.}
\address{School of Mathematical Sciences\\
University of Science and Technology of China\\
Hefei, 230026\\ P.R. China\\}
\email{hx1224@mail.ustc.edu.cn}

\begin{abstract}
  In the present paper, we consider Dirichlet Laplacian on compact surface. We show that for a fixed surface with boundary $X$, a finite increasing sequence of real numbers $0<a_1<a_2<\cdots<a_N$ and a positive number $A$, there exists a metric $g$ on $X$ such that for any integer $1\leq k\leq N$, we have $\lambda_k^\mathcal{D}(X,g)=a_k$ and $\area(X,g)=A$.
\end{abstract}

\keywords {Dirichlet eigenvalues, prescription of eigenvalues}

\maketitle

\section{Introduction}

The problem of constructing Riemannian metrics on a given smooth manifold with prescribed (finitely many) Laplacian eigenvalues was first studied by Y. Colin de Verdi\`ere in \cite{CV0}, in which he showed that for any closed smooth manifold $M$ of dimension at least three, and any finite  sequence of positive numbers $0=a_1<a_2 \le a_3 \le \cdots \le a_N$, there is a Riemannian metric $g$ on $M$ with $\lambda_i(M, g)= a_i (1 \le i \le N)$.  Similar results of prescribing eigenvalues in various different settings have been proved by many authors, see for example \cite{MD}, \cite{PJ}, \cite{PJ12}, \cite{PJ14} and \cite{HW}.

The rough idea behind the constructions of  Riemannian metrics with prescribed eigenvalues is as  follows: one first construct a discrete graph with prescribed eigenvalues (or construct such a hyperbolic surface modeled on the graph) and embed it into the target manifold, then one  ``thicken" the graph or surface and apply a stability argument to conclude the existence of the demanded Riemannian metric.  In the case of an arbitrary finite sequence of $N$ positive numbers, the complete graphs of order $N$ are used to ensure the stability argument. As a result, one can see that the same argument fails for a surface: In general one cannot embed a complete graph of order $N$ into a given surface unless the genus of the surface is large enough.

In fact, there is such a difference between eigenvalues of Riemannian manifolds of dimension at least three and  of surfaces. According to Y. Colin de Verdi\`ere's result alluded to above, for any closed manifold of dimension at least three, each eigenvalue could have arbitrarily large multiplicity. On the other hand, for closed surfaces it was first discovered by S Y. Cheng (\cite{Cheng}) that each eigenvalue has a  multiplicity upper bound (depending on the topology of the surface). His result has been improved by many authors, see for example \cite{Bes}, \cite{CV0}, \cite{Nad} and \cite{Sev} etc.

On the other hand, if the given sequence of positive numbers is strictly increasing, namely $0=a_1<a_2 < a_3 < \cdots < a_N$, then instead of using complete graphs, one can use star graphs to carry out the stability argument. As a result Y. Colin de Verdi\`ere proved in \cite{CV0} that any strictly increasing finite sequence of positive numbers can be realized as the first $N$ eigenvalues of any closed surface.

In this short paper we study the problem of constructing Riemannian metrics on a smooth surface with boundary with prescribed Laplacian eigenvalues. There are two natural boundary conditions that have been studied extensively for Laplacian defined on manifolds with boundary, namely the Dirichlet condition and the Neumann condition. It turns out that the Neumann case is very similar to the boundary-less case, and thus we will only study the case of prescribing eigenvalues of the Dirichlet Laplacian.

The main result in this paper is

\begin{theorem}\label{mthm}
Let $X$ be a smooth surface with nonempty boundary, $0<a_1<a_2<\cdots<a_N$ be a given sequence of real numbers and $A$ be a given positive number. Then there exists a Riemannian metric $g$ on $X$ such that for any integer $1\leq k\leq N$, we have $\lambda_k^\mathcal{D}(X,g)=a_k$ and $\area(X,g)=A$.
\end{theorem}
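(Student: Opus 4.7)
My plan is to adapt Colin de Verdi\`ere's star-graph approach, alluded to in the introduction, to the Dirichlet setting on a surface with boundary. The key observation is that, since the target sequence $0<a_1<\cdots<a_N$ is strictly increasing, the underlying combinatorial model can be taken to be the star $T_N$ --- a tree with one central vertex joined to $N$ leaves. Being a tree, $T_N$ embeds in any connected surface with nonempty boundary (with the leaves sent to distinct points of $\partial X$), so no topological obstruction arises, and the problem reduces to realizing the $T_N$-spectrum geometrically and then promoting approximate to exact equality.

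The construction would proceed in three stages. First, I set up a one-dimensional spectral model on $T_N$: assigning edge lengths $\ell_1,\ldots,\ell_N$, imposing Dirichlet conditions at the leaves and Kirchhoff matching at the center, and computing the resulting quantum graph eigenvalues via the usual secular determinant. A short calculation shows that the map $(\ell_1,\ldots,\ell_N)\mapsto(\mu_1,\ldots,\mu_N)$ sending lengths to the first $N$ eigenvalues is a local diffeomorphism near any strictly increasing $N$-tuple, so one can choose lengths realizing $(a_1,\ldots,a_N)$. Second, I embed $T_N$ in $X$ with leaves on $\partial X$ and build a one-parameter family of metrics $g_\varepsilon$ in which an $\varepsilon$-tubular neighborhood of $T_N$ carries a warped-product metric, while the complement is made spectrally inert (for example, cut off by a curve enclosing a region of diameter so small that its own first Dirichlet eigenvalue exceeds $a_N$). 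The warped-product profile is chosen so that the transverse Dirichlet ground-state eigenvalue is constant along the tube and can be absorbed by a conformal renormalization, with the effect that the longitudinal spectrum of $g_\varepsilon$ converges to the quantum graph spectrum of $T_N$ as $\varepsilon\to 0$. Third, a min--max / variational convergence argument gives $\lambda_k^\mathcal{D}(X,g_\varepsilon)\to\mu_k(T_N)$ for $k=1,\ldots,N$, and combining this with the local diffeomorphism property of the graph eigenvalue map allows an implicit function argument to select, for all small $\varepsilon$, lengths $\ell_i(\varepsilon)$ making $\lambda_k^\mathcal{D}(X,g_\varepsilon)=a_k$ exactly.

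To hit the prescribed area $A$ simultaneously, I would introduce one additional degree of freedom into the construction --- for instance a small ``reservoir'' region attached to $X$ by a narrow neck, placed far from $T_N$ and tuned so that its first Dirichlet eigenvalue lies well above $a_N$ but whose area can be varied freely --- and apply the same continuity argument to the augmented map into $(\lambda_1,\ldots,\lambda_N,\area)$. The principal technical obstacle I expect is the Dirichlet collapsing step: unlike in the Neumann or boundaryless case, the Dirichlet spectrum on a thin $\varepsilon$-tube is dominated by the diverging transverse ground state of order $\varepsilon^{-2}$, so the warped-product profile and its conformal renormalization have to be engineered so that this divergent contribution is uniform along the tube and can be subtracted away cleanly in the limit. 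Verifying this renormalization, together with the compatibility of the leaf-Dirichlet conditions with the ambient Dirichlet condition on $\partial X$ and the true spectral negligibility of the complement, is where the bulk of the careful analysis will lie.
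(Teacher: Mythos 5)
Your area step (a thin ``reservoir'' attached through a narrow neck, with first Dirichlet eigenvalue pushed above $a_N$ and area used as a free parameter, followed by a stability/continuity argument) is essentially the device the paper uses --- it attaches a long thin rectangle along a tiny boundary interval --- so that part of the plan is sound in spirit. The genuine gap is in your central analytic step: the claim that the Dirichlet spectrum of an $\varepsilon$-tubular neighborhood of the star $T_N$, after the divergent transverse ground-state energy of order $\varepsilon^{-2}$ is ``absorbed by a conformal renormalization,'' converges to the spectrum of the \emph{Kirchhoff} quantum graph on $T_N$. First, in dimension two a conformal change of metric leaves the Dirichlet energy invariant and only rescales the measure, so it cannot subtract a constant (let alone an $\varepsilon^{-2}$-divergent one) from the spectrum; to make the transverse threshold constant along an edge you are forced to constant-width tubes, and then all of the difficulty concentrates at the junction where the $N$ tubes meet. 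Second, and more seriously, for Dirichlet conditions on the tube walls it is known (Grieser, Molchanov--Vainberg, Post) that the renormalized operators generically converge to the \emph{decoupled} limit --- $N$ disjoint intervals with Dirichlet conditions at both ends --- and a Kirchhoff-type coupling survives only under fine-tuned junction scalings. In the decoupled limit the spectrum is $\{k^2\pi^2/\ell_i^2\}$, which cannot realize an arbitrary increasing sequence (it forces $a_N<4a_1$), so the lengths-to-eigenvalues local diffeomorphism you invoke has no limiting model to act on as written. A secondary gap: declaring the complement of the tube ``spectrally inert'' because its own first Dirichlet eigenvalue exceeds $a_N$ is not legitimate, since eigenfunctions of $(X,g_\varepsilon)$ do not vanish on the cutting curve; one needs a bracketing or quadratic-form estimate showing that low modes neither live in nor couple to the complement.

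For contrast, the paper avoids thin Dirichlet tubes altogether. It works with the weighted \emph{combinatorial} star $E_{N,1}$ ($N$ interior vertices, one boundary vertex), prescribes its spectrum by an explicit residue construction and shows the eigenvalue map is a submersion, and realizes it geometrically à la Colin de Verdi\`ere: vertex regions $\widehat{M_i^\varepsilon}$ carrying the measures $\mu_i$ are joined by interior cylinders $Z_i^\varepsilon$ of small conductance of order $\theta_i\varepsilon$, with the boundary vertex realized by the Dirichlet boundary circle of the disk. Since the Dirichlet condition sits only on $\partial X$ and never on tube walls, no divergent transverse mode arises; convergence of the first $N$ eigenvalues of $(D,\varepsilon h_\varepsilon)$ to the combinatorial eigenvalues, a uniform lower bound on $\lambda_{N+1}$, and the stability lemma of Colbois--Colin de Verdi\`ere then give exact prescription, after the general surface is reduced to the disk by an adding-handles/puncture stability argument (your observation that a tree embeds in any surface would indeed bypass that reduction, but only if the tube analysis worked). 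If you want to salvage your route, you would have to replace the Kirchhoff limit claim by the actual Dirichlet thin-neighborhood limit theory with carefully scaled junctions, which is substantially harder than the paper's argument.
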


The analogous problem  for  manifolds (with boundary) of dimension at least three has been studied recently in \cite{HW}, in which we showed that not only one can prescribe the first $N$ Dirichlet eigenvalue to be an arbitrary sequence $0<a_1<a_2 \le \cdots \le a_N$, but also one can prescribe the volume. We make two comments for the current setting:
\begin{itemize}
\item As in the boundary-less case, one can show that there is a multiplicity upper bound for each  Dirichlet eigenvalue of surfaces which depends on the topology of the surface (both the Euler characteristic $\chi(X)$ and the number of boundary components $b$).  For example, according to \cite{Ber}, for any surface with $\chi(X)+b<0$,  the $k$th Dirichlet eigenvalue has multiplicity no more than $2k-2(\chi(M)+b)+1$. So  we only construct Riemannian metrics with simple first $N$ eigenvalues. It is a natural question to ask whether it is possible to construct a Riemannian metric that attains (or gets close to) Berdnikov's bound.

\item The metric we will construct can also have arbitrarily prescribed area. Although in the case of manifold of dimension at least three, the same result holds for both closed eigenvalues problem (which was first proved by J. Lohkamp in \cite{L} via a doubling surface technique) and the Dirichlet eigenvalue problem (which was proven in \cite{HW}), the situation is different for surfaces.  In fact, for closed surfaces one cannot prescribe the first eigenvalue and the area at the same time. For example, it is recently proved by M. Karpukhin, N. Nadirashvili, A. Penskoi and I. Polterovich in \cite{KNPP} that  for any Riemannian metric with area one on the sphere $S^2$, the $k$th nonzero eigenvalue is no more than $8k\pi$, extending earlier results of J. Hersch \cite{Her} (for $k=1$), N. Nadirashvili \cite{Nad1} (for  $k=2$) and  N. Nadirashvili, Y.Sire \cite{NS} (for $k=3$). Similar upper bounds also exist for other closed surfaces:  P. Yang and S. T. Yau first proved in  \cite{YY} that for any oriented closed surface of genus $\gamma$ with area one, the first nonzero eigenvalue is no more than $8\pi(\gamma+1)$ and their result was extended  to non-orientable surfaces by  M. Karpukhin in \cite{Kar}  and to  higher eigenvalues by A. Hassannezhad in  \cite{Hass}.
So our result shows that there is another different nature between the Dirichlet eigenvalues for surfaces with boundary and  eigenvalues of closed surfaces, since for Dirichlet eigenvalues we can simultaneously prescribe the first $N$ eigenvalues and  the area.
\end{itemize}

\par 

We  briefly describe the idea of the proof, which is mainly inspired by Y. Colin de Verdi\`ere's work  in \cite{CV0}: 
We first  will use the ``adding handles" technique in Section 3 of \cite{CV0} to reduce the problem of prescribing finite Dirichlet spectrum on an arbitrary surface (with boundary) $X$ to the problem of prescribing finite Dirichlet spectrum on the disk, and we solve the problem on the disk by constructing star graphs  (with $N$ interior points and one  boundary point) with prescribed eigenvalues of the combinatorial Laplacian   and apply a stability argument.
The metric constructed in this way could have  arbitrarily small area. To  increase the area to  the prescribed value, we apply a new idea (which was also used in our earlier work \cite{HW}) of ``attaching rectangles"  to the boundary of the surface: by carefully choosing  the length of the sides of rectangle $R$ and the length of the intersection of $\partial R$ and $\partial X$,
we can show that the first $N$ eigenvalues of the Dirichlet Laplacian on $(X,h) \cup R$ and the first $N$ eigenvalues of the Dirichlet Laplacian on $(X,h)$ could be very close, and thus Theorem \ref{mthm} follows from another stability argument.

\textbf{Acknowledgments.} The author would like to thank his advisor, Zuoqin Wang, for numerous help during various stages of the work.

\section{The proof of Theorem \ref{mthm}}

The proof of the first part of Theorem \ref{mthm}, namely,  there exists a Riemannian metric $h$ on $X$ such that for any integer $1\leq k\leq N$, $\lambda_k^\mathcal{D}(X,g)=a_k$, is divided into three steps:
\begin{itemize}
  \item[Step 1.] reduce the problem to the case that the surface $X$ is a disk,
  \item[Step 2.] convert the problem of prescribing the first $N$ eigenvalues of the Dirichlet Laplacian on the disk to the problem of prescribing the eigenvalues of the combinatorial Laplacian on the star graph $E_{N,1}$ with $N$ interior points and one  boundary point,
  \item[Step 3.] prescribe the eigenvalues of the combinatorial Laplacian on the star graph  $E_{N,1}$.
\end{itemize}
Those three steps will be fulfilled in subsections 2.1-2.3 and for convenience, we will start with the third step. Then, we will prove the second part of Theorem \ref{mthm} in subsection 2.4.
\subsection{Eigenvalues on the star graph with one point boundary}

\noindent

For any  positive integer  $N$, we consider the star graph with $N$ interior points and one  boundary point, $E_{N,1}=(V,E)$, where $V=\{u_0\}\cup\{v_0,v_1,\cdots,v_{N-1}\}$ is the vertex set of $E_{N,1}$ and $E=\{(v_0,u_0)\}\cup\{(v_0,v_i),1\leq i\leq N-1\}$ is the edge set of $E_{N,1}$.

\par Let
 \begin{equation}
 H=\{f:V\to\mathbb{R}|\ f(u_0)=0\}.
 \end{equation}
For a given measure $\mu=\sum_{i=0}^{N-1}\mu_i\delta(v_i)$ $(\mu_i>0)$ on $E_{N,1}$, define an inner product $\langle\ ,\ \rangle_\mu$ on $H$ via
 \begin{equation}
 \langle f,g\rangle_\mu=\sum_{i=0}^{N-1}\mu_if(v_i)g(v_i), \ \forall f,g\in H.
 \end{equation}
For a given edge weight function
\[\Theta:E\to\mathbb{R}_{>0},\ (v_0,v_i)\mapsto\theta_i,\ (v_0,u_0)\mapsto\theta\]
(which will be viewed as a vector in $\mathbb{R}_{>0}^{N}$ in the following discussion), define a quadratic form $q_\Theta$ on $H$ via
 \begin{equation}
 q_\Theta(f)=\theta f(v_0)^2+\sum_{i=1}^{N-1}\theta_i(f(v_0)-f(v_i))^2.
 \end{equation}
Let $\Delta_{\mu,\Theta}$ be the combinatorial Laplacian associated with $(H,\mu,q_\Theta)$, namely
 \begin{equation}\label{1}
 \begin{aligned}
 \Delta_{\mu,\Theta}(f)(v_i)&=\frac{\theta_i}{\mu_i} (f(v_i)-f(v_0)),\ 1\leq i\leq N-1\\
 \Delta_{\mu,\Theta}(f)(v_0)&=\frac{\theta}{\mu_0}f(v_0) +\frac{1}{\mu_0}\sum_{i=1}^{N-1}\theta_i(f(v_0)-f(v_i)).
 \end{aligned}
 \end{equation}
For more background of the combinatorial Laplacian on graphs with boundary, one can see Chapter 8 of \cite{FRKC}.

Consider the eigenvalues of $\Delta_{\mu,\Theta}$, which will be denoted as
 \begin{equation}\label{eigfseq}
 0<\lambda_1(\mu,\Theta)<\lambda_2(\mu,\Theta)\leq\cdots\leq\lambda_N(\mu,\Theta).
 \end{equation}
By definition $\Delta_{\mu,\Theta}=\Delta_{t\mu,t\Theta}$, so we may normalize $\mu$ and $\Theta$ simultaneously so that  $\mu_0=1$, in which case the eigenvalues $\lambda = \lambda_k(\mu,\Theta)$  are exactly the solutions to the equation
 \begin{equation}
 1=\frac{\theta}{\lambda}+\sum_{i=1}^{N-1}\frac {\theta_i}{\lambda-\frac{\theta_i}{\mu_i}}.
 \end{equation}
It turns out that one can prescribe the eigenvalues (\ref{eigfseq}) to be any strictly increasing sequence by choosing suitable $\mu$ and $\Theta$:
\begin{proposition}\label{graph}
For any sequence $0<a_1<a_2<\cdots<a_N$, there exists a measure $\mu$ on $E_{N,1}$ and an edge weight function $\Theta:E\to\mathbb{R}_{>0}$ such that the eigenvalues of $\Delta_{\mu,\Theta}$ are precisely $a_i$'s.
\end{proposition}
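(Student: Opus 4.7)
The plan is to convert the spectrum-prescription into a partial-fraction identity, building directly on the secular equation already displayed in the excerpt. Normalizing $\mu_0 = 1$, I set $\alpha_i := \theta_i/\mu_i$ for $1 \le i \le N-1$, so that the eigenvalues are precisely the solutions of
\[
R(\lambda) := \frac{\theta}{\lambda} + \sum_{i=1}^{N-1}\frac{\theta_i}{\lambda - \alpha_i} = 1.
\]
Since $R'$ is strictly negative off its poles, $R$ is monotonically decreasing from $+\infty$ to $-\infty$ on each interval between consecutive poles of $\{0,\alpha_1,\ldots,\alpha_{N-1}\}$ (arranged in increasing order) and on $(\alpha_{N-1},\infty)$, while $R<0$ on $(-\infty,0)$. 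Counting zeros of $R(\lambda)-1$ interval by interval shows that for any positive choice of $\theta,\theta_i$ and of $\alpha_1<\cdots<\alpha_{N-1}$, the $N$ solutions automatically interlace with the poles as $0<\lambda_1<\alpha_1<\lambda_2<\cdots<\alpha_{N-1}<\lambda_N$.

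Guided by this forced interlacing, I pick the auxiliary parameters from the target data, e.g.\ $\alpha_i := (a_i+a_{i+1})/2 \in (a_i,a_{i+1})$ for $1\le i\le N-1$. Then asking $\lambda_k(\mu,\Theta)=a_k$ for every $k$ is exactly the collection of $N$ conditions $R(a_k)=1$. Writing $p(\lambda):=\prod_{k=1}^{N}(\lambda-a_k)$ and $q(\lambda):=\prod_{i=1}^{N-1}(\lambda-\alpha_i)$, this is equivalent to the rational identity
\[
\frac{p(\lambda)}{\lambda\, q(\lambda)} \;=\; 1 \;-\; \frac{\theta}{\lambda} \;-\; \sum_{i=1}^{N-1}\frac{\theta_i}{\lambda-\alpha_i},
\]
which is nothing but the partial fraction decomposition of the left-hand side (degrees agree and the poles are simple). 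The residue computation uniquely determines
\[
\theta \;=\; -\frac{p(0)}{q(0)} \;=\; \frac{\prod_{k} a_k}{\prod_{i}\alpha_i}, \qquad \theta_i \;=\; -\frac{p(\alpha_i)}{\alpha_i\, q'(\alpha_i)},
\]
after which I recover $\mu_i:=\theta_i/\alpha_i$ for $1\le i\le N-1$ and keep $\mu_0:=1$.

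What is left is positivity of these parameters. The formula for $\theta$ is manifestly positive. For $\theta_i$ I would do a direct sign count using $\alpha_i\in(a_i,a_{i+1})$ together with $\alpha_1<\cdots<\alpha_{N-1}$: exactly $N-i$ factors of $p(\alpha_i)=\prod_k(\alpha_i-a_k)$ are negative, and exactly $N-1-i$ factors of $q'(\alpha_i)=\prod_{j\ne i}(\alpha_i-\alpha_j)$ are negative. The parities differ by one, so $p(\alpha_i)/q'(\alpha_i)<0$ and hence $\theta_i>0$. The whole construction is explicit, with no stability argument required; the main point — and the only potential obstacle — is purely organizational, namely recognizing that the $\alpha_i$ should be treated as free auxiliary parameters to be chosen inside the gaps $(a_i,a_{i+1})$, after which both solvability and positivity drop out of the same interlacing that governs $R(\lambda)=1$.
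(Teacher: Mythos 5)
Your construction is essentially identical to the paper's proof: both normalize $\mu_0=1$, choose interlacing parameters $b_i=\alpha_i\in(a_i,a_{i+1})$, read off $\theta$ and $\theta_i$ as (minus) the residues of $\dfrac{\prod_k(\lambda-a_k)}{\lambda\prod_i(\lambda-\alpha_i)}$ at $0$ and $\alpha_i$, and set $\mu_i=\theta_i/\alpha_i$. The only difference is that you spell out the positivity sign count and the interlacing of the secular equation's roots, which the paper dismisses as ``easy to check,'' and your verification of these points is correct.
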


\begin{proof} 
 Take $\mu_0=1$ and choose any $b_1, \cdots, b_{N-1}$ satisfying
\[0<a_1<b_1<a_2<b_2<\cdots<a_{N-1} <b_{N-1}<a_N.\]
Let $-\theta$ be the residue of the rational function
 \begin{equation*}
 R(\lambda)=\frac{\prod_{i=1}^N(\lambda-a_i)}{\lambda\prod_{i=1}^{N-1}(\lambda-b_i)}
 \end{equation*}
at $\lambda=0$ and $-\theta_i$ be the residue of $R(\lambda)$ at $\lambda=b_i$ for all $1\leq i\leq N-1$. Finally take $\mu_i=\frac{\theta_i}{b_i}$ for all $1\leq i \leq N-1$. It is easy to check that the $\mu$ and $\Theta$ constructed by this way are what we need.
\end{proof}

For similar result on the star graph, one can see \S4 of \cite{CV1}. Next, we consider the regularity of $\lambda_i(\mu,\Theta)$ on $\{\theta,\theta_i,\mu_j|\ 1\leq i\leq N-1,\ 0\leq j\leq N-1\}$ in the region that each eigenvalue is simple:

\begin{proposition}\label{ni}
For any $0<\lambda_1<\lambda_2<\cdots<\lambda_N$, the map
 \begin{equation}\label{Phi}
 \Phi:\mathbb{R}_{>0}^{2N}\to\mathbb{R}_{> 0}^N,\ \{\theta,\theta_i,\mu_j|\ 1\leq i\leq N-1,\ 0\leq j\leq N-1\}\mapsto\{\lambda_j\}_{1\leq j\leq N}
 \end{equation}
is a submersion on a neighborhood of $\Phi^{-1}(\lambda_1,\cdots,\lambda_N)$.
\end{proposition}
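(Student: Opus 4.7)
The plan is to compute the differential of $\Phi$ at a preimage point via the implicit function theorem applied to the characteristic equation for the eigenvalues, and then to show the resulting $N \times 2N$ Jacobian has full rank by a rational function argument. At any point $p_0 \in \Phi^{-1}(\lambda_1, \ldots, \lambda_N)$, the eigenvalues $\lambda_1 < \cdots < \lambda_N$ are simple. Writing
\[
F(\lambda; \theta, \theta_i, \mu_j) := \frac{\theta}{\lambda} + \sum_{i=1}^{N-1} \frac{\theta_i}{\lambda - b_i} - \mu_0, \qquad b_i := \theta_i/\mu_i,
\]
each $\lambda_k$ is a simple zero of $F$, so $F'(\lambda_k) \neq 0$, and the implicit function theorem yields $\partial \lambda_k/\partial p = -(\partial_p F)(\lambda_k)/F'(\lambda_k)$ for each of the $2N$ parameters $p$.

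Suppose $c = (c_1, \ldots, c_N) \in \mathbb{R}^N$ annihilates the image of $d\Phi$; I must show $c = 0$. Computing $\partial_p F$ explicitly for each $p$ and setting $w_k := c_k/F'(\lambda_k)$, the conditions $\sum_k c_k \partial_p \lambda_k = 0$ translate to $\sum_k w_k/\lambda_k = 0$, $\sum_k w_k = 0$, and for each $i = 1, \ldots, N-1$,
\[
\sum_k \frac{\lambda_k w_k}{(\lambda_k - b_i)^2} = 0, \qquad \sum_k \frac{w_k}{(\lambda_k - b_i)^2} = 0.
\]
Taking the first of these displayed equations minus $b_i$ times the second yields the additional identity $\sum_k w_k/(\lambda_k - b_i) = 0$ for each $i$.

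The key step is to consider the rational function
\[
G(\lambda) := \sum_{k=1}^N \frac{w_k}{\lambda - \lambda_k} = \frac{P(\lambda)}{\prod_k (\lambda - \lambda_k)},
\]
with $P$ a polynomial of degree at most $N - 1$. The identity $\sum_k w_k = 0$ forces $G(\lambda) = O(1/\lambda^2)$ at infinity, whence $\deg P \leq N - 2$. The derived identities translate precisely to $G(b_i) = 0$ and $G'(b_i) = 0$, so each of the $N - 1$ points $b_i$ (distinct from the $\lambda_k$ by the standard interlacing of zeros and poles of $F$) is a zero of $P$ of multiplicity at least $2$. For $N \geq 2$ this would require $\deg P \geq 2(N-1) > N - 2$, so $P \equiv 0$, hence $G \equiv 0$; linear independence of $\{1/(\lambda - \lambda_k)\}$ then gives $w_k = 0$ for all $k$, whence $c = 0$. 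The case $N = 1$ is immediate since then $\Phi(\theta, \mu_0) = \theta/\mu_0$.

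The main conceptual obstacle is finding the right packaging of the four families of derivative conditions and the right rational function $G$ so that the problem reduces to a sharp degree-versus-zero-count contradiction; once $G$ is introduced the remainder is essentially a dimension count. The interlacing property $\lambda_k < b_k < \lambda_{k+1}$, which ensures $b_i \neq \lambda_k$ so that the zeros of $G$ at the $b_i$ are genuine zeros of $P$, follows from the fact that $F$ has $N$ simple zeros and $N$ simple poles (at $0$ and the $b_i$) with residues of uniform sign.
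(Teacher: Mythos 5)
Your argument is correct and it takes a genuinely different route from the paper's. The paper splits the statement into two parts: it first proves smoothness by an induction (factoring out $\lambda-\lambda_1$) showing that the map from the coefficients of the characteristic polynomial to its $N$ simple roots is a local diffeomorphism, and it then gets surjectivity of $d\Phi$ without any Jacobian computation, by restricting $\Phi$ to the explicit $N$-dimensional slice $L_b=\{\mu_0=1,\ \theta_i/\mu_i=b_i\}$ and observing that on this slice $\Phi$ has the explicit local inverse built from the residues of $R(\lambda)$ in Proposition \ref{graph}; a local diffeomorphism on a slice forces $d\Phi$ to be onto. You instead differentiate the secular equation $F=0$ in all $2N$ parameters and kill an annihilating covector by the rational function $G(\lambda)=\sum_k w_k/(\lambda-\lambda_k)$ and a zero-count against the degree bound forced by $\sum_k w_k=0$. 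Your computations check out: $\partial_\theta F=1/\lambda$, $\partial_{\mu_0}F=-1$, $\partial_{\theta_i}F=\lambda/(\lambda-b_i)^2$, $\partial_{\mu_i}F=-b_i^2/(\lambda-b_i)^2$, the derived identities give $G(b_i)=G'(b_i)=0$ and $G=O(\lambda^{-2})$, and simplicity of the zeros of $F$ is automatic since $F$ is strictly decreasing between its poles. The paper's route is shorter because it recycles Proposition \ref{graph} and produces an explicit right inverse; yours is self-contained, exhibits the full Jacobian, and shows directly why no covector can annihilate it.

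One caveat, which applies equally to the paper's own proof: for $N\ge 3$ the fibre $\Phi^{-1}(\lambda_1,\dots,\lambda_N)$ also contains points at which two of the $b_i=\theta_i/\mu_i$ coincide; at such a point the common value is itself one of the (still simple) eigenvalues, realized by an eigenfunction vanishing at $v_0$, so that this $\lambda_k$ is a pole rather than a zero of $F$, and your interlacing/implicit-function setup (just like the paper's residue-inverse on $L_b$, which needs $\lambda_1<b_1<\cdots<b_{N-1}<\lambda_N$) does not literally apply there. Strictly speaking, both arguments establish submersivity only near the non-degenerate part of the fibre; this is all that is used later, since the point produced by Proposition \ref{graph} satisfies the strict interlacing. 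Covering the degenerate stratum would require a separate argument, e.g.\ via the first-order perturbation formulas $\partial\lambda_k/\partial\theta=\varphi_k(v_0)^2$, $\partial\lambda_k/\partial\theta_i=(\varphi_k(v_0)-\varphi_k(v_i))^2$, $\partial\lambda_k/\partial\mu_j=-\lambda_k\varphi_k(v_j)^2$ for $\mu$-normalized eigenfunctions $\varphi_k$, but that is beyond what either your proposal or the paper writes down.
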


\begin{proof}
We first prove the smoothness of  $\Phi$  on a neighborhood of $\Phi^{-1}(\lambda_1,\cdots,\lambda_N)$. By definition, $\lambda_i$'s are solutions of a degree $N$ polynomial,
\[P_c(\lambda)=\lambda^N+\sum\nolimits_{i=0}^{N-1}c_i\lambda^i,\]
where $c=(c_0, \cdots c_{N-1})$ depends smoothly on $\mu, \Theta$. Note that since $P_c$ has $n$ distinct real roots, for any $c'$ in a small neighborhood of $c$, the polynomial $P_{c'}$ also has $n$ distinct real roots. So it is enough to prove that the map
\begin{equation}\label{loc-diff}
\{c_i\}_{0\leq i\leq N-1}\mapsto\{\lambda_j\}_{1\leq j\leq N}
\end{equation}
is a local diffeomorphism in a neighborhood of $c$ under the condition that $P_c$ has roots  $0<\lambda_1<\cdots<\lambda_N$.

We use  induction on $N$. For $N=1$, the conclusion is obvious since the map is $\lambda_1=-c_0$. Assume that for $N=d-1$, the map (\ref{loc-diff}) is a local diffeomorphism when $0<\lambda_1<\cdots<\lambda_{d-1}$. For $N=d$, write $P_c(\lambda)$ as
 \begin{equation*}
 P_c(\lambda)=(\lambda-\lambda_1)(\lambda^{d-1}+\sum_{i=0}^{d-2}b_i\lambda^i).
 \end{equation*}
If we write $b_{d-1}=1$ and $b_{-1}=0$, then for any $0 \le i \le d-1$,
\[c_i=b_{i-1}-\lambda_1 b_i.\]
Under the assumption that $\lambda_1$ is a simple root of $P_c$, the map
 \begin{equation*}
 F:\mathbb{R}_{>0}\times\mathbb{R}^{d-1}\to\mathbb{R}^d,\ \{\lambda_1,b_i\}_{0\leq i\leq d-2}\mapsto\{c_i\}_{0\leq i\leq d-1}
 \end{equation*}
is a local diffeomorphism since
\[\det(\mathrm{d}F)=-(\lambda_1^{d-1}+\sum_{i=0}^{d-2}b_i\lambda_1^i)\neq 0.\]
Denote the local inverse of $F$ to be
 \begin{equation*}
 G:U_a \subset\mathbb{R}^d\to\mathbb{R}_{>0}\times\mathbb{R}^{d-1},\ \{c_i\}_{0\leq i\leq d-1}\mapsto \{\lambda_1,b_i\}_{0\leq i\leq d-2}.
 \end{equation*}
 Since  by reduction, the map
 \begin{equation*}
 H:\mathbb{R}^{d-1}\to\mathbb{R}_{>0}^{d-1},\ \{b_i\}_{0\leq i\leq d-2}\mapsto\{\lambda_j\}_{2\leq j\leq d}
 \end{equation*}
is a local diffeomorphism, the map $\{c_i\}_{0\leq i\leq d-1}\mapsto\{\lambda_j\}_{1\leq j\leq d}$ is a local diffeomorphism when $0<\lambda_1<\cdots<\lambda_d$.

To prove that $\Phi$ is a submersion, it is enough to notice that when  restricted to the subspace
\[L_b=\{ \theta,\theta_i,\mu_j\ |\ \mu_0=1,\ \frac{\theta_i}{\mu_i}=b_i, 1 \le i \le N-1\} \subset \{\theta,\theta_i,\mu_j|\ 1\leq i\leq N-1,\ 0\leq j\leq N-1\},\]
the map $\Phi|_{L_b}$ is invertible  on a neighborhood of $\Phi^{-1}(\lambda_1,\cdots,\lambda_N)$, with inverse the map constructed in the proof of Proposition \ref{graph}, namely
\[
(a_1, a_2, \cdots, a_N) \mapsto (-\mathrm{Res}_{\lambda=0}R(\lambda), -\mathrm{Res}_{\lambda=b_{i}}R(\lambda), -\frac{\mathrm{Res}_{\lambda=b_{i}}R(\lambda)}{b_i}).
\]
This completes the proof.
\end{proof}

\subsection{Reduce the problem to the case of disk}\label{simplify}

\par\noindent

\par The purpose of this subsection is to reduce the problem to the case that the surface $X$ is a disk. First we introduce the conception of a stable map:
\begin{defn}Let $B$ be a closed ball in $\mathbb{R}^M$ and $f:B\to\mathbb{R}^N$ be a continuous map. We say $f$ is \textbf{stable} at point $y\in f(B)$ if there exists an $\alpha>0$ such that for any map $g:B\to\mathbb{R}^N$ with $\|g-f\|_{C(B)}<\alpha$, there is a point $x\in\mathrm{int} (B)$ with $g(x)=y$.
\end{defn}

\par For any surface with boundary  $X$, let $\mathcal{M}(X)$ be the space of smooth Riemannian metrics on $X$. Given any sequence $0<a_1<a_2<\cdots<a_N$, suppose that there exists a $C^1$ map
\[f:B\to\mathcal{M}(X)\]
(where $B$ is a closed ball centered at 0) such that the associated map 
 \begin{equation*}
 \tilde f:B\to\mathbb{R}^N,\ \tilde f(p)=\big(\lambda_1^\mathcal{D}(X,f(p)),\cdots,\lambda_N^\mathcal{D}(X,f(p))\big)
 \end{equation*}
satisfies
\begin{itemize}
\item $\tilde f(0)=(a_1, \cdots, a_N)$,
\item $\tilde f$ is stable at $(a_1,\cdots,a_N)$.
\item $\tilde f$ is Lipschitz and there exists a constant $L>0$ such that 
\begin{equation}\label{L}
|\tilde f(p)-\tilde f(q)|\leq L|p-q|,\qquad \forall p,q\in B.
\end{equation}
\end{itemize}
The last item is ensured by the $C^1$ assumption of $f$.

We will show
\begin{proposition}\label{p3.2}
Under the previous assumption, for each of the surfaces
\begin{itemize}
\item $X'=X \# D$ (i.e. $X$ with one puncture),
\item $\widetilde X=X \# \mathbb T^2$ (i.e. $X$ with one handle),
\item $\widehat X=X \# \mathbb{RP}^2$ (i.e. $X$ with one cross hat),
\end{itemize}
there exists a family of metrics such that the associated map of eigenvalues is still stable at $(a_1,\cdots,a_N)$.
\end{proposition}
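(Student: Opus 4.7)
The plan is to implement Colin de Verdi\`ere's ``adding handles'' surgery from Section 3 of \cite{CV0}: perform the desired topological modification inside a tiny geodesic disk in the interior of $X$, producing a new family of metrics $f_\epsilon$ on the modified surface whose associated eigenvalue map $\tilde f_\epsilon$ lies within $\alpha/2$ of $\tilde f$ in the $C(B)$-norm, where $\alpha>0$ is the stability constant of $\tilde f$ at $(a_1,\ldots,a_N)$. Stability then transfers automatically: for any continuous $g:B\to\mathbb{R}^N$ with $\|g-\tilde f_\epsilon\|_{C(B)}<\alpha/2$ one has $\|g-\tilde f\|_{C(B)}<\alpha$, so $g$ hits $(a_1,\ldots,a_N)$ in $\mathrm{int}(B)$; this is precisely the stability of $\tilde f_\epsilon$ at $(a_1,\ldots,a_N)$ with constant $\alpha/2$.

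To construct $f_\epsilon$, I fix an interior point $x_0\in\mathrm{int}(X)$ (and a second interior point $y_0$ in the handle case). For each $p\in B$, let $D_\epsilon(p)$ denote the geodesic disk of radius $\epsilon$ about $x_0$ in the metric $f(p)$. For $X'=X\#D$, define $f_\epsilon(p)$ to be the restriction of $f(p)$ to $X\setminus D_\epsilon(p)$, which creates one new boundary circle. For $\widetilde X=X\#\mathbb{T}^2$, remove the two disks around $x_0$ and $y_0$ and glue in a short flat cylinder whose circumference matches the boundary lengths, smoothed so that the result is a genuine Riemannian metric. For $\widehat X=X\#\mathbb{RP}^2$, remove the disk around $x_0$ and glue in a flat M\"obius band of matching boundary length. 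Since $f$ is $C^1$ in $p$, so is $f_\epsilon$.

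The crucial step is to prove that the first $N$ Dirichlet eigenvalues of $f_\epsilon(p)$ converge to those of $f(p)$ uniformly in $p\in B$ as $\epsilon\to 0$. The upper bound uses the min-max principle: multiply each of the first $N$ eigenfunctions of $(X,f(p))$ by a radial logarithmic cutoff supported outside $D_\epsilon(p)$ and extend by zero across the glued piece; in two dimensions an annular cutoff between radii $\epsilon$ and $\sqrt\epsilon$ costs only $O(1/\log(1/\epsilon))$ in Dirichlet energy, so the resulting Rayleigh quotients on the modified surface are $o_\epsilon(1)$-close to the original eigenvalues. The matching lower bound uses concentration: the Dirichlet spectrum of the glued piece (small disk, cylinder, or M\"obius band) starts at order $\epsilon^{-2}$, so the first $N$ eigenfunctions of the modified surface cannot concentrate there; restricting them to $X\setminus D_\epsilon(p)$ and applying another cutoff to enforce the Dirichlet condition yields admissible test functions for $(X,f(p))$ with Rayleigh quotients within $o_\epsilon(1)$ of the new eigenvalues. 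Compactness of $B$ together with continuity of $f$ into $\mathcal{M}(X)$ makes all the $o_\epsilon(1)$ constants uniform in $p$. The main obstacle is precisely this uniform-in-$p$ control; the slow logarithmic decay forces $\epsilon$ to be chosen quite small, but the Lipschitz bound (\ref{L}) and the compactness of $B$ guarantee the required uniform estimate, after which picking $\epsilon$ small enough that $\|\tilde f_\epsilon-\tilde f\|_{C(B)}<\alpha/2$ completes the proof.
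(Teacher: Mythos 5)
Your overall framework is the same as the paper's: do the surgery inside shrinking geodesic disks, show the first $N$ Dirichlet eigenvalues of the modified family converge to those of $(X,f(p))$ uniformly in $p$, and then transfer stability by the $\alpha/2$ triangle-inequality argument (uniformity coming from compactness of $B$ and the equi-Lipschitz bound). For the puncture $X'=X\#D$ your argument is fine: Dirichlet domain monotonicity gives one inequality for free, and the logarithmic-capacity cutoff gives the other; this is essentially Courtois' theorem, which the paper simply cites.

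The gap is in your lower bound for the handle and cross-cap cases, i.e.\ the claim $\lambda_k^\mathcal{D}(X,f(p))\le \lambda_k^\mathcal{D}(\text{modified})+o_\varepsilon(1)$. Your justification is that ``the Dirichlet spectrum of the glued piece starts at order $\varepsilon^{-2}$, so the first $N$ eigenfunctions cannot concentrate there,'' but the eigenfunctions of the modified surface satisfy no boundary condition on the junction circle(s), so the Dirichlet spectral gap of the small cylinder or M\"obius band does not apply to them as stated; a function can be large on the glued piece while being a competitor for a low eigenvalue, and ruling this out requires a quantitative trace/extension estimate across a circle of length $O(\varepsilon)$, exactly where two-dimensional estimates degenerate logarithmically. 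Likewise, the subsequent ``cutoff to enforce the Dirichlet condition'' costs $\int|\nabla\chi_\varepsilon|^2 u^2$, which you can only control with uniform pointwise (or fine integral) bounds on the eigenfunction $u$ near the junction, in a geometry that degenerates as $\varepsilon\to0$; none of this is supplied, and it is not a routine omission. This is precisely the step the paper handles differently: it never analyzes eigenfunctions on the glued surface, but instead squeezes $\lambda_k^\mathcal{D}(\widetilde X)$ (resp.\ $\widehat X$) between the eigenvalues of the holed surface with Dirichlet and with Neumann conditions on the small circles, quoting \cite{GC} for the Dirichlet side and proving the Neumann-on-shrinking-circles convergence in the appendix via the Rauch--Taylor spectral projection argument \cite{RT} (harmonic extension plus convergence of resolvents/projections). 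To repair your proof you would either need to carry out such a concentration estimate in detail (in effect re-proving the appendix theorem), or replace your lower-bound step by the Dirichlet--Neumann bracketing; also note that your ``glue in a flat cylinder/M\"obius band and smooth'' step, like the paper's normalization of the metric to $\mathrm{d}r^2+\varepsilon^2\mathrm{d}\theta^2$ near the circles, should be justified as a small perturbation absorbed by stability rather than asserted.
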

\begin{proof}
First we realize $X'$ as $X_\varepsilon=X\setminus B_\varepsilon(x)$, where  $x\in X$ is a point and $B_\varepsilon(x)$ is the $\varepsilon$-geodesic disk centered at $x$  with respect to the corresponding metric. 
According to Theorem 1.4 in \cite{GC}, one has
 \begin{equation}\label{coneigen}
 \lim_{\varepsilon\to 0}\lambda_k^\mathcal{D}(X_\varepsilon,f(p))=\lambda_k^\mathcal{D}(X,f(p)),\ \forall k\geq 1
 \end{equation}
for any $p\in B$. By the $C^1$ assumption of $f$, the map
\[
\tilde f_\varepsilon:B\to\mathbb{R}^N,\ \tilde f_\varepsilon(p)=\big(\lambda_1^\mathcal{D}(X_\varepsilon,f(p)),\cdots,\lambda_N^\mathcal{D}(X _\varepsilon,f(p))\big)
\]
is Lipschitz and one can assume that 
\begin{equation}\label{epsL}
|\tilde f_\varepsilon(p)-\tilde f_\varepsilon(q)|\leq L|p-q|,\qquad \forall p,q\in B
\end{equation}
where $L$ is the same constant in (\ref{L}). Then by (\ref{coneigen}) and (\ref{epsL}), $\tilde f_\varepsilon$ is uniformly converges to $\tilde f$ as $\varepsilon$ goes to zero and further, by the stability of $\tilde f$ at $(a_1,\cdots,a_N)$, when $\varepsilon$ is small enough, there exists $q\in \mathrm {int} (B)$ such that
 \begin{equation*}
 \lambda_k^\mathcal{D}(X_\varepsilon,f(q))=a_k,\ \forall 1\leq k\leq N
 \end{equation*}
and the map $\tilde f_\varepsilon$ is also stable at $(a_1,\cdots,a_N)$.

Next we realize $\widetilde X$ as the surface obtained by gluing $\partial B_\varepsilon(x_1)$ and $\partial B_\varepsilon(x_2)$ of $X\setminus(B_\varepsilon(x_1)\cup B_\varepsilon(x_2))$,
where $B_\varepsilon(x_1)$ and $B_\varepsilon(x_2)$ are non-intersecting $\varepsilon$-geodesic disks centered at $x_1$ and $x_2$ respectively. Since the map $\tilde f$ is stable at $(a_1,\cdots,a_N)$, when slightly changing the metric $f(p)$ near $x_1$ and $x_2$, the map of eigenvalues associated to the new metric is still stable at  $(a_1,\cdots,a_N)$. So we may assume that the metric $f(p)$ is of the form
\[\mathrm{d}r^2+\varepsilon^2\mathrm{d}\theta^2,\quad  (r,\theta)\in(\varepsilon,\varepsilon+\rho)\times[0,2\pi)\]
on the annuli $B_{\varepsilon+\rho}(x_1)\setminus B_\varepsilon(x_1)$ and $B_{\varepsilon+\rho}(x_2)\setminus B_\varepsilon(x_2)$, where  $\rho$ is a small constant depending on $\varepsilon$.
After gluing $\partial B_\varepsilon(x_1)$ and $\partial B_\varepsilon(x_2)$ of $X\setminus(B_\varepsilon(x_1)\cup B_\varepsilon(x_2))$ for each $f(p)$, we get a family of metrics
\[k :B\to\mathcal{M}(\widetilde X).\]
Let $\{\lambda^\mathcal{N}_k(p,\varepsilon)\}_{k=1}^\infty$ be the eigenvalues of the Laplacian on
\[(X\setminus(B_\varepsilon(x_1)\cup B_\varepsilon(x_2)), f(p))\]
with the following boundary conditions:
 \begin{center}
 Dirichlet on $\partial X$ and Neumann on $\partial B_\varepsilon(x_1)\cup \partial B_\varepsilon(x_2)$
 \end{center}
and $\{\lambda^\mathcal{D}_k(p,\varepsilon)\}_{k=1}^\infty$ be the eigenvalues of the Dirichlet Laplacian on
\[(X\setminus(B_\varepsilon(x_1)\cup B_\varepsilon(x_2)), f(p)).\]
In the appendix, we will prove   $\lim_{\varepsilon \to 0}\lambda_k^\mathcal{N}(p,\varepsilon)=\lambda^\mathcal{D}_k(X,f(p))$. Again by  Theorem 1.4 in \cite{GC},  $\lim_{\varepsilon \to 0}\lambda_k^\mathcal{D}(p,\varepsilon)=\lambda^\mathcal{D}_k(X,f(p))$. On the other hand, by the standard min-max characterization of eigenvalues,
 \begin{equation*}
 \lambda_k^\mathcal{N}(p,\varepsilon)\leq\lambda_k^\mathcal{D}(\widetilde X, k(p))\leq\lambda_k^\mathcal{D}(p,\varepsilon).
 \end{equation*}
So similar to the proof of the first part, the map
\[\tilde{k}:B\to\mathbb{R}^N, \quad  \tilde{k}(p)=\big(\lambda_1^\mathcal{D}(\widetilde X,k(p)),\cdots,\lambda_N^\mathcal{D}( \widetilde X,k(p))\big)\]
is also stable at $(a_1,\cdots,a_N)$ when $\varepsilon$ is small enough.

Finally we realize $\widehat X$ as the surface obtained from $X \setminus B_\varepsilon(x_1)$ by gluing each pair of opposite points on the circle $\partial B_\varepsilon(x_1)$
, where $B_\varepsilon(x_1)$ is the $\varepsilon$-geodesic disk centered at $x_1$. Similar to the case of adding handle, we can assume that the metric $f(p)$ is of the form
\[\mathrm{d}r^2+\varepsilon^2\mathrm{d}\theta^2,\quad  (r,\theta)\in(\varepsilon,\varepsilon+\rho)\times[0,2\pi)\]
on the annuli $B_{\varepsilon+\rho}(x_1)\setminus B_\varepsilon(x_1)$, where  $\rho$ is a small constant depending on $\varepsilon$. After gluing opposite points on $\partial B_\varepsilon(x_1)$, we get a family of metrics
\[l :B\to\mathcal{M}(\widehat  X).\]
Let $\{\Lambda^\mathcal{N}_k(p,\varepsilon)\}_{k=1}^\infty$ be the eigenvalues of the Laplacian on
\[(X\setminus B_\varepsilon(x_1), f(p))\]
with the following boundary conditions:
 \begin{center}
 Dirichlet on $\partial X$ and Neumann on $\partial B_\varepsilon(x_1)$
 \end{center}
and $\{\Lambda^\mathcal{D}_k(p,\varepsilon)\}_{k=1}^\infty$ be the eigenvalues of the Dirichlet Laplacian on
\[(X\setminus B_\varepsilon(x_1), f(p)).\]
Similarly, one can prove that
\[\lim_{\varepsilon\to 0}\Lambda^\mathcal N_k(p,\varepsilon)=\lim_{\varepsilon\to 0}\Lambda^\mathcal D_k(p,\varepsilon)=\lambda_k^\mathcal D(X,f(p))\]
and
\[\Lambda^\mathcal N_k(p,\varepsilon) \leq \lambda_k^\mathcal D(\widehat X,l(p)) \leq \Lambda^\mathcal D_k(p,\varepsilon).\]
So similar to the proof of the first part, the map
\[\tilde{l}:B\to\mathbb{R}^N, \quad  \tilde{l}(p)=\big(\lambda_1^\mathcal{D}(\widehat X,l(p)),\cdots,\lambda_N^\mathcal{D}( \widehat X,l(p))\big)\]
is also stable at $(a_1,\cdots,a_N)$ when $\varepsilon$ is small enough.
\end{proof}

Since every  surface with boundary is topologically of the form
\[D \# \mathbb{T}^2 \# \cdots \#\mathbb{T}^2\#D\# \cdots \# D \quad \text{or}\quad D \# \mathbb{RP}^2 \# \cdots \# \mathbb{RP}^2 \# D\# \cdots \# D,\] it remains to prove the first part of Theorem \ref{mthm} for  the disk $D$.

\subsection{Proof of the first part of Theorem \ref{mthm} for  the disk} \label{con mt}

\par\noindent

\par By Proposition \ref{graph}, for any $0<a_1<\cdots<a_N$, there exists $\mu$ and $\Theta$ on the graph $E_{N,1}$ such that $\lambda_i(\mu,\Theta)=a_i$ for all $1\leq i\leq N$. Then by scaling a constant on both $\mu$ and $\Theta$, we can assume that $\mu_0\geq N+1$ and $\mu_i\geq 2$ for all $1\leq i\leq N-1$. Now we construct a metric on the disk $D$ using $\mu$ and $\Theta$:
\begin{itemize}
  \item $Z_i^\varepsilon=[0,\mathrm{arccosh}\frac{1}{\theta_i\pi\varepsilon}] \times\mathbb{R}/\mathbb{Z}$ with metric $\mathrm{d}x^2+(\theta_i\pi\varepsilon\cosh x)^2\mathrm{d}\theta^2$,
  \item[$\bullet$]$\widehat{M_i^\varepsilon}$ = the surface $D$ endowed with a metric whose boundary length is 1 and $\area(\widehat{M_i^\varepsilon})+\area(Z^\varepsilon_i)=\mu_i$,
  \item[$\bullet$]$Z^\varepsilon=[0,\mathrm{arccosh\frac{2}{\theta\pi\varepsilon}}]\times\mathbb {R}/\mathbb{Z}$ with metric $\mathrm{d}x^2+(\frac{\theta\pi\varepsilon}{2}\cosh x)^2\mathrm{d}\theta^2$,
  \item[$\bullet$]$\widehat{M^\varepsilon_0}$ = the surface which homeomorphic to $D$ with $N-1$ small disks removed, endowed with a metric so that the boundary circles are all of length 1, and $\area(\widehat{M^\varepsilon_0})+\area(Z^\varepsilon)+\sum_{i=1}^{N-1}\area(Z^\varepsilon _i)=\mu_0$.
\end{itemize}
By carefully choosing metrics on $\widehat{M_i^\varepsilon}$ and $\widehat{M^\varepsilon_0}$, one may assume
\begin{itemize}
  \item the surface $M_i$ constructed by gluing $Z^\varepsilon_i$ to $\widehat{M_i^\varepsilon}$ along the boundary circle of length 1 is a smooth surface, see the graph \\
      \includegraphics[scale=0.4]{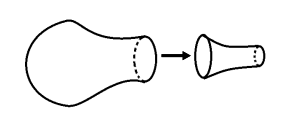}
  \item the surface $M_0$ constructed by gluing $Z^\varepsilon, Z^\varepsilon_1, \cdots, Z^\varepsilon_{N-1}$ to $\widehat{M^\varepsilon_0}$ along the boundary circles of length 1 is a smooth surface.
\end{itemize}
Note that each $M_i$ has a boundary circle of length $\theta_i\pi\varepsilon$, while $M_0$ has $N-1$ boundary circles of lengths $\theta_1\pi\varepsilon, \cdots,  \theta_{N-1}\pi\varepsilon$ respectively. By gluing the corresponding boundary circles of the same length, we get the demanded metric on $D$, which will be denoted by \begin{equation}\label{hvarep}
h_\varepsilon=h_\varepsilon(\theta,\theta_i,\mu_i).
\end{equation}
In the construction, we can make $h_\varepsilon$ be $C^1$ with respect to the parameters $\{\theta,\theta_i,\mu_i\}$.

Now consider the function space
 \begin{equation}
 E^\varepsilon_0=\left\{f\in H^1_0(D)\left| \!\!\!\!\! \!\!\!
  \begin{array}{ll} & \text{there exist  } x_0 \text{ and } x_i \text{ such that } f|_{\widehat{M^\varepsilon_i}}\equiv x_i,  f|_{\widehat{M_0^\varepsilon}}\equiv x_0,\\
   & \text{and $f$ is harmonic on $Z^\varepsilon$ and each $Z^\varepsilon_i$}\end{array}\right.\right\},\\
 \end{equation}

Similar to Proposition 4.3 and 4.4 in \cite{HW}, one has
\begin{proposition}\label{p3.3}
Suppose $f_\varepsilon\in E_0^\varepsilon$ and $f_\varepsilon|_{\widehat{M^\varepsilon_i}}\equiv x_i$, $f_\varepsilon|_{\widehat{M_0^\varepsilon}}\equiv x_0$ are independent of $\varepsilon$, then
 \begin{equation}\label{f32}
 \lim_{\varepsilon\to0}\frac{\int_D f_\varepsilon^2\mathrm{d}V_{h_\varepsilon}}{\sum_{i=0}^{N-1}\mu_i x_i^2}=1
 \end{equation}
and
 \begin{equation}\label{f33}
 \lim_{\varepsilon\to0}\frac{\int_D|\nabla f_\varepsilon|^2\mathrm{d}V_{h_\varepsilon}}{\varepsilon\cdot(\theta x_0^2+\sum_{i=1}^{N-1}\theta_i(x_0-x_i)^2)}=1.
 \end{equation}
Moreover when $\varepsilon$ is small enough, there exists a uniform constant $C>0$ such that for any $f\in E_0^\varepsilon$, $g\in H_0^1(D)$,
 \begin{equation}
 |\int_D\nabla f\cdot\nabla g\mathrm{d}V_{h_\varepsilon}|\leq C\varepsilon\|f\|_{L^2(D,h_\varepsilon)}\|g\|_{H^1_0(D,h_\varepsilon)}.
 \end{equation}
\end{proposition}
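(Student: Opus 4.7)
The plan is to reduce all three statements to explicit one-variable computations on the hyperbolic trumpet cylinders. Since $f_\varepsilon$ is constant on each thick region $\widehat{M_j^\varepsilon}$, the nontrivial behavior lives entirely on $Z^\varepsilon$ and the $Z_i^\varepsilon$'s. On a cylinder with metric $\mathrm{d}x^2 + (c\cosh x)^2\,\mathrm{d}\theta^2$, the harmonic function $u$ with constant boundary data depends only on $x$ and satisfies $(\cosh(x)u'(x))' = 0$, giving $u'(x) = A/\cosh(x)$ and hence $u(x) = A\,\mathrm{gd}(x) + B$, where $\mathrm{gd}(x) = \int_0^x\mathrm{d}t/\cosh t$ is the Gudermannian function. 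The key asymptotic inputs, both immediate from $\cosh(L) = 1/c$ and $L\to\infty$ as $\varepsilon\to 0$, are that $\mathrm{gd}(L)\to\pi/2$ and $c\sinh L = \sqrt{1-c^2}\to 1$, so $\area(Z_i^\varepsilon) \to 1$ with the area density $c\cosh x$ concentrated exponentially near the length-$1$ end.

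For (\ref{f32}), I would split the integral into contributions from the thick pieces and the cylinders. The thick pieces give exactly $x_j^2\area(\widehat{M_j^\varepsilon})$. On each copy of $Z_i^\varepsilon$ attached to $\widehat{M_j^\varepsilon}$, boundedness of $u$ together with area concentration near the length-$1$ end (where $u = x_j$) yields $\int_{Z_i^\varepsilon} f_\varepsilon^2\,\mathrm{d}V = x_j^2\area(Z_i^\varepsilon) + o(1)$, and similarly $\int_{Z^\varepsilon} f_\varepsilon^2\,\mathrm{d}V = x_0^2\area(Z^\varepsilon) + o(1)$. Summing and using $\area(\widehat{M_i^\varepsilon}) + \area(Z_i^\varepsilon) = \mu_i$ and $\area(\widehat{M_0^\varepsilon}) + \area(Z^\varepsilon) + \sum_{i\ge 1}\area(Z_i^\varepsilon) = \mu_0$ produces $\sum_i \mu_i x_i^2$. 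For (\ref{f33}), the thick pieces contribute nothing and on each cylinder $|\nabla u|^2 = A^2/\cosh^2 x$, so $\int |\nabla u|^2\,\mathrm{d}V = cA^2\mathrm{gd}(L) = cA\cdot\Delta u$ where $\Delta u$ is the boundary difference of $u$. For the double trumpet between $\widehat{M_i^\varepsilon}$ and $\widehat{M_0^\varepsilon}$, the throat value is pinned to $(x_0+x_i)/2$ by symmetry, and summing the two halves yields $\theta_i\varepsilon(x_0-x_i)^2 + o(\varepsilon)$; $Z^\varepsilon$ analogously contributes $\theta\varepsilon x_0^2 + o(\varepsilon)$ (the Dirichlet value $u=0$ at $\partial D$ replaces the inner boundary data), matching the RHS of (\ref{f33}).

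For the last estimate, $\nabla f = 0$ on the thick regions so the integral is again a sum over cylinders. On each cylinder, introducing $\bar g(x) = \int_0^1 g(x,\theta)\,\mathrm{d}\theta$ and using that $\nabla f$ has no $\theta$-component while $f'(x)\cosh(x) = A$ is constant, one obtains $\int_{Z_i^\varepsilon}\nabla f\cdot\nabla g\,\mathrm{d}V = cA(\bar g(L) - \bar g(0))$ (and the analogous identity on a double trumpet with $\bar g(-L)$ in place of $\bar g(0)$). Since $cA = O(\varepsilon)$ and $|A| \le C\|f\|_{L^2(D,h_\varepsilon)}$ by (\ref{f32}), the remaining task is to bound $|\bar g(\pm L)|$ by $\|g\|_{H^1}$ uniformly in $\varepsilon$. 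This follows from the trace inequality on the length-$1$ interface circle, which sits inside the uniformly-controlled thick region $\widehat{M_j^\varepsilon}$; for $Z^\varepsilon$, the Dirichlet condition $g|_{\partial D} = 0$ directly replaces the trace bound at the small end. I expect the main technical obstacle to be exactly this uniformity of the trace constant: one must arrange the metrics on the $\widehat{M_j^\varepsilon}$'s to converge smoothly to a fixed limit as $\varepsilon\to 0$, which is precisely what the hypotheses $\mu_0\ge N+1$ and $\mu_i\ge 2$ are for, ensuring these thick regions remain non-degenerate.
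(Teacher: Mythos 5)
Your proposal is correct, and in fact the paper contains no proof of Proposition \ref{p3.3} to compare against: it is simply quoted as ``similar to Propositions 4.3 and 4.4 in \cite{HW}''. Your reduction to the rotationally symmetric harmonic functions $u=A\,\mathrm{gd}(x)+B$ on the hyperbolic cylinders, with $\area(Z_i^\varepsilon)=c\sinh L=\sqrt{1-c^2}\to 1$ and $\mathrm{gd}(L)\to\pi/2$, is exactly the computation such a proof rests on, and the identity $\int_{\mathrm{cyl}}\nabla f\cdot\nabla g\,\mathrm{d}V=cA\,(\bar g(L)-\bar g(0))$ plus a trace bound on the length-one interface circles does give the cross-term estimate. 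Two refinements would tighten it. First, for the bound $|cA|\leq C\varepsilon\|f\|_{L^2}$ you should not appeal to (\ref{f32}), which is only a limit statement for a fixed vector $(x_0,\dots,x_{N-1})$ and does not by itself give a constant uniform over all $f\in E_0^\varepsilon$; instead use $\area(\widehat{M_i^\varepsilon})=\mu_i-\area(Z_i^\varepsilon)\geq 1$ and $\area(\widehat{M_0^\varepsilon})\geq 1$ (this is the actual role of the normalization $\mu_i\geq 2$, $\mu_0\geq N+1$), which gives $x_j^2\leq\int_D f^2\,\mathrm{d}V_{h_\varepsilon}$ for every $j$ and hence $|x_i-x_0|\leq 2\|f\|_{L^2}$ directly. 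Second, the uniform trace constant is obtained most cleanly inside the cylinder collar $\{L-1\leq x\leq L\}$, where $c\cosh x$ stays between roughly $e^{-1}$ and $1$ for small $\varepsilon$, so the metric is uniformly comparable to a product metric; this sidesteps any need to arrange smooth convergence of the metrics on the $\widehat{M_j^\varepsilon}$ (and for $Z^\varepsilon$ the Dirichlet condition at $\partial D$ handles the narrow end, as you note). Finally, in (\ref{f32}) the step $\int_{Z}f_\varepsilon^2=x_j^2\area(Z)+o(1)$ uses, besides boundedness and area concentration, that $u\to x_j$ uniformly on $\{L-T\leq x\leq L\}$; this is immediate from your explicit formula since $\mathrm{gd}(L)-\mathrm{gd}(L-T)\to 0$, but it should be said.
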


\par Next, we prove that the eigenvalue $\lambda_{N+1}^\mathcal{D}(D,h_\varepsilon)$ have a uniform lower bound when $\varepsilon$ is small enough. This is based on the following two lemmas.
Consider the cylinder $T=[a,b]\times \mathbb{R}/\mathbb{Z}$ with the metric $\mathrm{d}r^2+l^2\cosh^2r\mathrm{d}\theta^2$.
\begin{lemma}[Lemma 3.2 in \cite{JTBD}] \label{l3.2} For any continuously differentiable function  $k$  which vanishes on $\partial T$,
 \begin{equation*}
 \frac{1}{4}\int_T k^2\leq\int_T |\nabla k|^2.
 \end{equation*}
\end{lemma}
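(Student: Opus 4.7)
The plan is to reduce the $2$-dimensional Poincar\'e-type bound to a $1$-dimensional weighted inequality on the interval $[a,b]$ by peeling off the $\theta$ variable, and then to prove that $1$-dimensional inequality via a ``ground state'' conjugation followed by a single integration by parts.

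In coordinates $(r,\theta)$ the volume form is $l\cosh r\,\mathrm{d}r\,\mathrm{d}\theta$ and
\[
|\nabla k|^2 = (\partial_r k)^2 + \frac{1}{l^2\cosh^2 r}(\partial_\theta k)^2.
\]
Since the $\theta$-derivative term contributes a nonnegative quantity to the right-hand side, after Fubini it suffices to show, for each fixed $\theta$ and for $u(r):=k(r,\theta)$ with $u(a)=u(b)=0$, the $1$-dimensional weighted inequality
\[
\frac{1}{4}\int_a^b u^2\cosh r\,\mathrm{d}r \;\leq\; \int_a^b (u')^2\cosh r\,\mathrm{d}r.
\]
Note that the parameter $l$ factors out entirely and is irrelevant from this point on.

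To prove the $1$-dimensional inequality, I would introduce the conjugation $v(r) := (\cosh r)^{1/2}u(r)$, which still vanishes at $a$ and $b$. A direct expansion yields
\[
(u')^2\cosh r = \frac{\sinh^2 r}{4\cosh^2 r}v^2 - \tanh r\cdot vv' + (v')^2.
\]
Integrating the middle term by parts with $\frac{\mathrm{d}}{\mathrm{d}r}\tanh r = \mathrm{sech}^2 r$ and the vanishing of $v$ at the endpoints converts $-\int \tanh r\cdot vv'\,\mathrm{d}r$ into $\tfrac{1}{2}\int \mathrm{sech}^2 r\cdot v^2\,\mathrm{d}r$. Combined with the identity $\sinh^2 r + 2 = \cosh^2 r + 1$, this gives
\[
\int_a^b (u')^2\cosh r\,\mathrm{d}r = \int_a^b\Bigl(\frac{1}{4}+\frac{1}{4\cosh^2 r}\Bigr)v^2\,\mathrm{d}r + \int_a^b (v')^2\,\mathrm{d}r \;\geq\; \frac{1}{4}\int_a^b v^2\,\mathrm{d}r,
\]
and the right-hand side equals $\tfrac{1}{4}\int_a^b u^2\cosh r\,\mathrm{d}r$, as desired.

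There is no real obstacle in this argument: it is driven entirely by the correct choice of conjugation $u = (\cosh r)^{-1/2}v$, which is the same device that exhibits $1/4$ as the bottom of the $L^2$-spectrum of the Laplacian on the hyperbolic plane (of which $T$ is a subdomain of a quotient); the present lemma is simply the Dirichlet form manifestation of that spectral gap. Once the substitution is made, only one integration by parts and one hyperbolic identity are required, and the whole bound is in fact sharper than claimed by the extra nonnegative $v'$ and $(\cosh r)^{-2}$ contributions.
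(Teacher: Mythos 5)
Your argument is correct: the expansion $(u')^2\cosh r=\frac{\sinh^2 r}{4\cosh^2 r}v^2-\tanh r\,vv'+(v')^2$ for $v=(\cosh r)^{1/2}u$, the integration by parts of the cross term (legitimate since $v$ vanishes at $r=a,b$ and $u$ is $C^1$), and the identity $\sinh^2 r+2=\cosh^2 r+1$ do give $\int_a^b(u')^2\cosh r\,\mathrm{d}r\ge\frac14\int_a^b v^2\,\mathrm{d}r=\frac14\int_a^b u^2\cosh r\,\mathrm{d}r$, and discarding the nonnegative $\theta$-derivative term plus Fubini (the factor $l$ cancels) yields the stated bound on $T$. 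Note, however, that the paper itself offers no proof of this statement: it is quoted verbatim as Lemma 3.2 of the cited reference of Dodziuk--Pignataro--Randol--Sullivan, so the comparison is really with that source. There the standard route is even shorter and avoids any conjugation: writing $\cosh r=(\sinh r)'$, one integrates by parts once to get $\int u^2\cosh r\,\mathrm{d}r=-2\int uu'\sinh r\,\mathrm{d}r$, then applies Cauchy--Schwarz with the elementary bound $\sinh^2 r/\cosh r\le\cosh r$ to obtain $A\le 2A^{1/2}B^{1/2}$, i.e.\ $A\le 4B$, where $A=\int u^2\cosh r\,\mathrm{d}r$ and $B=\int(u')^2\cosh r\,\mathrm{d}r$. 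Your ground-state substitution costs one extra computation but buys a slightly stronger conclusion, since it keeps the explicit nonnegative remainders $\int(v')^2$ and $\frac14\int v^2/\cosh^2 r$, and it makes transparent that the constant $\frac14$ is the bottom of the $L^2$-spectrum of the hyperbolic plane; either proof is a perfectly acceptable self-contained replacement for the citation.
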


\begin{lemma}[Lemma 3.3 in \cite{JTBD}]\label{l3.3} Suppose $b-a>2$ and let $S\subset T$ be the set of points of $T$ at distances less than or equal to 1 from $\partial T$. Then there exists $\eta>0$ such that for any $k \in H^1(T)$ satisfying
\[0<\int_T k^2=c<\infty, \quad \int_S k^2<\eta c, \quad \int_S|\nabla k|^2<\eta c,\]
we have
 \begin{equation*}
 \int_T|\nabla k|^2>\frac{c}{8}.
 \end{equation*}
\end{lemma}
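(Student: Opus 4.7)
The plan is to reduce Lemma \ref{l3.3} to the Poincar\'e-type inequality Lemma \ref{l3.2} by multiplying $k$ by a radial cutoff $\psi$ that forces it to vanish on $\partial T$. The two smallness hypotheses $\int_S k^2 < \eta c$ and $\int_S|\nabla k|^2 < \eta c$ are precisely what is needed to absorb the errors from this truncation, and the factor-of-two gap between the Poincar\'e constant $1/4$ from Lemma \ref{l3.2} and the target constant $1/8$ in the conclusion gives exactly enough slack.

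Concretely, I would take $\psi : T \to [0,1]$ depending only on $r$, equal to $r-a$ on $[a, a+1]$, equal to $b-r$ on $[b-1, b]$, and identically $1$ on the middle strip $[a+1, b-1]\times \mathbb{R}/\mathbb{Z}$ (nonempty by $b-a > 2$). In the metric $\mathrm{d}r^2 + l^2 \cosh^2 r\,\mathrm{d}\theta^2$ we have $|\nabla \psi|_g = |\psi'(r)| \le 1$, with $\nabla \psi$ supported in $S$, so $\psi k \in H^1_0(T)$ and Lemma \ref{l3.2} applies to it.

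The lower bound on $\int_T (\psi k)^2$ is immediate: since $\psi \equiv 1$ off $S$, we have $\int_T (\psi k)^2 \ge \int_{T \setminus S} k^2 \ge (1-\eta) c$. For the upper bound on $\int_T |\nabla(\psi k)|^2$, expand the square: the main term $\int_T \psi^2 |\nabla k|^2$ is at most $\int_T |\nabla k|^2$; the cross term $2 \int_S \psi k\,\nabla \psi \cdot \nabla k$ is bounded by $2\sqrt{\int_S k^2}\sqrt{\int_S |\nabla k|^2} < 2\eta c$ via Cauchy--Schwarz; and $\int_S k^2 |\nabla \psi|^2 \le \int_S k^2 < \eta c$. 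Assembling, Lemma \ref{l3.2} gives $(1-\eta) c \le 4 \int_T |\nabla k|^2 + 12 \eta c$, i.e.\ $\int_T |\nabla k|^2 \ge (1 - 13\eta) c / 4$, which strictly exceeds $c/8$ provided $\eta < 1/26$.

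The only genuine ``obstacle'' is the bookkeeping of constants: the factor-of-two slack between $1/4$ and $1/8$ must be apportioned between the mass lost from $\int_S k^2$, the cross-term loss, and the gradient-of-cutoff term, each of which is of size $O(\eta c)$. Choosing $\psi$ to depend on $r$ alone is what makes $|\nabla \psi|_g$ bounded by $1$ independently of the cylinder modulus $l$, so that the resulting $\eta$ is a genuine universal constant rather than one that degenerates as $l \to 0$.
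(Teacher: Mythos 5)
Your argument is correct. Note that the paper itself does not prove this statement at all: it is quoted verbatim as Lemma 3.3 of the reference \cite{JTBD} (Dodziuk--Pignataro--Randol--Sullivan), so there is no in-paper proof to compare against; your cutoff-and-absorb argument is the standard route and is essentially the one used in that original reference, reducing to the $1/4$-Poincar\'e bound of Lemma \ref{l3.2}. The constants check out: with $\psi$ radial, $0\le\psi\le1$, $|\nabla\psi|\le1$, $\nabla\psi$ supported in $S$, one gets $\int_T(\psi k)^2\ge(1-\eta)c$ and $\int_T|\nabla(\psi k)|^2\le\int_T|\nabla k|^2+3\eta c$, whence $(1-\eta)c\le 4\int_T|\nabla k|^2+12\eta c$ and $\int_T|\nabla k|^2\ge(1-13\eta)c/4>c/8$ for $\eta<1/26$, exactly as you wrote. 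Two cosmetic points worth a sentence each if you write this up: the identification $S=\left([a,a+1]\cup[b-1,b]\right)\times\mathbb{R}/\mathbb{Z}$ should be justified by noting that the distance to $\partial T$ in the metric $\mathrm{d}r^2+l^2\cosh^2 r\,\mathrm{d}\theta^2$ is $\min(r-a,b-r)$, since any path changing $r$ by $\delta$ has length at least $\delta$; and Lemma \ref{l3.2} is stated for $C^1$ functions vanishing on $\partial T$, so its application to $\psi k\in H^1_0(T)$ requires the routine density of smooth functions vanishing near $\partial T$ in $H^1_0(T)$.
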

For simplicity we denote $\lambda^\varepsilon_{N+1}=\lambda_{N+1}^\mathcal{D}(D,h_\varepsilon)$.
\begin{proposition}\label{lbd}
For $\varepsilon$  small enough, there exists a uniform $\beta>0$ such that $\lambda^\varepsilon_{N+1}>\beta$.
\end{proposition}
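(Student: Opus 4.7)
The plan is a dimension-counting contradiction argument: by Proposition \ref{p3.3}, small-Dirichlet-energy functions on $(D,h_\varepsilon)$ are essentially determined by their averages on the $N$ body pieces $\widehat{M_0^\varepsilon},\ldots,\widehat{M_{N-1}^\varepsilon}$, so an $(N+1)$-dimensional low-energy subspace must contain a nonzero function whose $L^2$ mass cannot be accounted for anywhere. Assume for contradiction that $\lambda_{N+1}^{\varepsilon_k}\to 0$ along some $\varepsilon_k\to 0$, let $V^k\subset H_0^1(D)$ be the span of the first $N+1$ Dirichlet eigenfunctions, and consider the body-average map
\[
\pi^k:V^k\to\mathbb{R}^N,\qquad \pi^k(f)=\Bigl(\tfrac{1}{\mathrm{Area}(\widehat{M_i^{\varepsilon_k}})}\int_{\widehat{M_i^{\varepsilon_k}}} f\,\mathrm{d}V_{h_{\varepsilon_k}}\Bigr)_{i=0}^{N-1}.
\]
Since $\dim V^k=N+1>N$, one can pick $u_k\in V^k$ with $\|u_k\|_{L^2}=1$, $\pi^k(u_k)=0$, and $\int_D|\nabla u_k|^2\leq\lambda_{N+1}^{\varepsilon_k}$.

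I would then bound $\|u_k\|_{L^2}^2$ piece by piece. On each body $\widehat{M_i^{\varepsilon_k}}$ the metrics may be arranged to converge smoothly as $\varepsilon\to 0$ (the topology, area, and boundary length being $\varepsilon$-independent), so the first nonzero Neumann eigenvalue and the trace constant are bounded uniformly in $k$; together with the vanishing body averages, this gives $\int_{\widehat{M_i^{\varepsilon_k}}}u_k^2\leq C_1\int_{\widehat{M_i^{\varepsilon_k}}}|\nabla u_k|^2$ and $\int_{\partial\widehat{M_i^{\varepsilon_k}}}u_k^2\leq C_2\int_{\widehat{M_i^{\varepsilon_k}}}|\nabla u_k|^2$, both $O(\lambda_{N+1}^{\varepsilon_k})$. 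For each cylinder $T\in\{Z^{\varepsilon_k},Z_1^{\varepsilon_k},\ldots,Z_{N-1}^{\varepsilon_k}\}$ (or the doubled cylinder obtained by matching two halves along the final neck-gluing) the hypothesis $b-a>2$ of Lemma \ref{l3.3} holds for $\varepsilon_k$ small; let $S\subset T$ be the distance-$1$ collar of $\partial T$. Each component of $S$ has circumference at most $1$, so propagating the trace bound into the collar by the one-dimensional Poincar\'e inequality in the axial direction (or by the Dirichlet condition $u_k|_{\partial D}=0$ at the narrow end of $Z^{\varepsilon_k}$) yields $\int_S u_k^2+\int_S|\nabla u_k|^2\leq C_3\lambda_{N+1}^{\varepsilon_k}$. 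Setting $c_k=\int_T u_k^2$, either $\int_T|\nabla u_k|^2>c_k/8$ and so $c_k<8\lambda_{N+1}^{\varepsilon_k}$, or the contrapositive of Lemma \ref{l3.3} forces $\max(\int_S u_k^2,\int_S|\nabla u_k|^2)\geq\eta c_k$, giving $c_k\leq C_3\lambda_{N+1}^{\varepsilon_k}/\eta$; in either case $c_k=O(\lambda_{N+1}^{\varepsilon_k})$.

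Summing over all $N$ bodies and all $N$ cylinders yields $1=\|u_k\|_{L^2}^2\leq C_4\lambda_{N+1}^{\varepsilon_k}\to 0$, a contradiction. The main technical difficulty is keeping every constant above uniform in $\varepsilon_k$: on the bodies this relies on preparing the metrics to converge smoothly as $\varepsilon\to 0$, and on the cylinder collars one has to exploit that although the fiber length $\theta_i\pi\varepsilon\cosh r$ degenerates globally, over any length-$1$ axial window the ratio of adjacent fiber lengths stays bounded, so the $1$D Poincar\'e and trace constants remain independent of $\varepsilon$.
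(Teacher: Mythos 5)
Your argument is sound, but it takes a genuinely different route from the paper's. The paper proves Proposition \ref{lbd} by cutting $(D,h_\varepsilon)$ along the short neck geodesics and using the min--max (Neumann bracketing) bound $\lambda^\varepsilon_{N+1}\geq\min(b,\min_i b_i)$, where $b_i$ is the first nonzero Neumann eigenvalue of $M_i$ and $b$ the second mixed eigenvalue of $M_0$; it then bounds $b$ and the $b_i$ from below by contradiction, using a Moser-iteration sup-gradient estimate on the thick parts, Courant's nodal domain theorem combined with Lemma \ref{l3.2} in the case of a large body average, and Lemma \ref{l3.3} applied to the reflected half-neck in the other case. You avoid the bracketing entirely: a dimension count inside the span of the first $N+1$ eigenfunctions yields a unit-norm function of energy at most $\lambda^{\varepsilon_k}_{N+1}$ with zero average on every body, and you annihilate its $L^2$ mass piece by piece, via uniform Neumann--Poincar\'e and trace inequalities on the bodies and, on each neck, the dichotomy furnished by Lemma \ref{l3.3} once the unit collar is controlled by propagating the trace bound (or the Dirichlet condition at the thin end of $Z^{\varepsilon}$) inward. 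This trades Moser iteration, Courant's theorem and Lemma \ref{l3.2} for the requirement that the body metrics be chosen so that their Neumann gaps and trace constants are $\varepsilon$-uniform; that is an assumption on the construction, but it is available (the metrics on $\widehat{M_i^\varepsilon}$, $\widehat{M_0^\varepsilon}$ can be taken to converge as $\varepsilon\to0$) and the paper's own proof needs comparable uniformity (its Moser constant $C$ and the constant $a$ are taken independent of $\varepsilon$). Two further points you implicitly use do hold: the doubled neck sitting inside $D$ is literally a cylinder of the form covered by Lemma \ref{l3.3} (no reflection needed, since $\cosh$ is even), and the constant $\eta$ there is independent of the waist length $l$ and of the cylinder length, which is exactly how the paper uses it as $\varepsilon\to0$. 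So your proposal stands as a correct alternative proof, structurally a global variational argument rather than the paper's piecewise eigenvalue comparison.
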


\begin{proof}Denote by
\begin{itemize}
  \item $\gamma_i$ the geodesic circle on $\partial M_0$ of length $\theta_i\pi\varepsilon$,
  \item $\gamma$ the geodesic circle on $\partial M_0$ of length $\frac{\theta\pi\varepsilon}{2}$.
\end{itemize}
Let
\begin{itemize}
  \item $b_i$ be the first nonzero Neumann eigenvalue of the surface $M_i$,
  \item $b$ be the second eigenvalue of mixed boundary-value Laplacian on $M_0$ with Neumann on each $\gamma_i$ and Dirichlet on $\gamma$.
\end{itemize}
Then by min-max principle, one gets
 \begin{equation}
 \lambda^\varepsilon_{N+1}\geq \min(b,\min_{1\leq i\leq N-1}b_i).
 \end{equation}
It remains to give a lower bound for $b$ and each $b_i$.

Let $\varphi_i$ be an eigenfunction associated to $b_i$ with $\|\varphi_i\|_{L^2}=1$. Then by Moser iteration in standard elliptic theory and the fact $\Delta_1\mathrm{d}\varphi_i=\mathrm{d}\Delta\varphi_i=b_i\mathrm{d}\varphi_i$ (where $\Delta_1$ is the Hodge Laplacian on 1-forms), there exists a uniform constant $C$ for $\varepsilon$ small enough such that
 \begin{equation}\label{3.6}
 \max_{\widehat{M^\varepsilon_i}}|\nabla\varphi_i|\leq C(1+b_i)\|\nabla\varphi_i\|_{L^2}=C(1+b_i)\sqrt{b_i}.
 \end{equation}
Let
 \begin{equation*}
 \bar\varphi_i=\frac{1}{\area(\widehat{M^\varepsilon_i})}\int_{\widehat{M^\varepsilon_i}} \varphi_i\mathrm{d}V_{h_\varepsilon}.
 \end{equation*}
Fix  a constant $a>0$ independent of $\varepsilon$ with
\[a\area(\widehat{M^\varepsilon_i})<\frac{1}{2} \text{ and } 4a^2<\frac{\eta}{2},\]
where $\eta$ is the constant in Lemma \ref{l3.3}.
\begin{itemize}
  \item  For the case $\bar\varphi_i>a$: If $b_i$ is very small, then by (\ref{3.6}), $\varphi_i$ will not change sign on $\widehat{M^\varepsilon_i}$. By Courant's nodal domain theorem, there exists a component of the nodal domain of $\varphi_i$ that lies in $Z^\varepsilon_i$. So by Lemma \ref{l3.2}, $b_i\geq\frac{1}{4}$ which   contradicts to the smallness of $b_i$. Thus $b_i$ has a uniform lower bound in this case.
  \item For the case $\bar\varphi_i\leq a$: reflect the function $\varphi_i|_{Z^\varepsilon_i}$ to get a function on the cylinder $\widetilde{Z^\varepsilon_i} =[-\mathrm{arccosh}\frac{1}{\theta_i\pi\varepsilon},\mathrm{arccosh}\frac{1}{\theta_i\pi\varepsilon}] \times\mathbb{R}/\mathbb{Z}$. If $b_i$ is very small, then the extended function  on $\widetilde{Z^\varepsilon_i}$ verifies all conditions in Lemma \ref{l3.3} with the constant $c>\frac{1}{2}$. So we get  $\int_{Z^\varepsilon_i}|\nabla\varphi_i|^2\mathrm{d}V_{h_\varepsilon}>\frac{1}{32}$ which contradicts to the smallness of $b_i$.
\end{itemize}
So $b_i$ has a uniform lower bound for $\varepsilon$ small enough.

For the lower bound of $b$, the proof is similar: Let $\psi$ be a $L^2$-normalized eigenfunction associated to $b$. By using Moser iteration, one can prove
\[\sup_{\widehat{M_0^\varepsilon}}|\nabla\psi|\to0\]
as $b\to0$. Thus the same argument as above implies that $b$ has a uniform lower bound for $\varepsilon$ small enough. This completes the proof. 
\end{proof}

The last ingredient for the proof is
\begin{lemma}[Lemma 1.1 in \cite{CC}]\label{l3.7}
Let $(H,|\cdot|)$ be a Hilbert space and $q$ be a positive quadratic form (whose domain $\mathcal{D}$ is dense in $H$) with discrete spectrum $\{v_k\}_{k=1}^\infty$. Let $F$ be an $N$-dimensional subspace of $\mathcal{D}$ and $\{v_j^F\}_{j=1}^N$ be the eigenvalues of the restriction of $q$ to $F$. Assume that $v_{N+1}\geq C_1\geq v_N$.
Then there exist constants $C(C_1,N)>0$ and $\varepsilon_0(C_1,N)>0$ such that if
\begin{equation}\label{qphig}
|q(\phi,g)|\leq\varepsilon|\phi||g|_{+1}, \qquad \forall g\in\mathcal{D}, \phi\in F
\end{equation}
holds for some $0<\varepsilon<\varepsilon_0(C_1,N)$, where $|g|_{+1}=\sqrt{|g|^2+q(g)}$, then
 \begin{equation*}
 v^F_j-C(C_1,N)\varepsilon^2\leq v_j\leq v_j^F,\ \forall 1\leq j\leq N.
 \end{equation*}
\end{lemma}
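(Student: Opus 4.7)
Both inequalities will follow from combining the Courant--Fischer min-max principle with a spectral projection argument. The upper bound $v_j \leq v_j^F$ is immediate: let $\hat\phi_1, \ldots, \hat\phi_N$ be a $|\cdot|$-orthonormal basis of $F$ diagonalizing $q|_F$ with eigenvalues $v_j^F$; then $\mathrm{span}(\hat\phi_1, \ldots, \hat\phi_j) \subset \mathcal{D}$ is a $j$-dimensional test subspace and min-max gives $v_j \leq v_j^F$.

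For the lower bound I would first establish that every element of $F$ lives almost entirely in the first $N$ spectral modes. Fix a $|\cdot|$-orthonormal eigenbasis $\{e_k\}$ of $q$, set $V_N := \mathrm{span}(e_1, \ldots, e_N)$, and decompose any $\phi \in F \subset \mathcal{D}$ as $\phi = \phi_\ell + \phi_h$ with $\phi_\ell \in V_N$ and $\phi_h \in V_N^\perp \cap \mathcal{D}$. Because $\{e_k\}$ is simultaneously $|\cdot|$- and $q$-orthogonal, one has $q(\phi,\phi_h) = q(\phi_h)$ and $q(\phi_h) \geq v_{N+1} |\phi_h|^2 \geq C_1 |\phi_h|^2$. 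Plugging $g = \phi_h$ into \eqref{qphig} yields the self-improving inequality
\begin{equation*}
q(\phi_h) \;\leq\; \varepsilon\,|\phi|\sqrt{|\phi_h|^2 + q(\phi_h)} \;\leq\; \varepsilon\,|\phi|\sqrt{(1+C_1^{-1})\,q(\phi_h)},
\end{equation*}
which after squaring gives $q(\phi_h) + |\phi_h|^2 \leq C(C_1)\,\varepsilon^2 |\phi|^2$.

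This smallness means that the projection $T := P_{V_N}|_F : F \to V_N$ is an isomorphism for $\varepsilon$ smaller than some threshold $\varepsilon_0(C_1,N)$. Setting $\psi_j := T\hat\phi_j \in V_N$, the identities
\[
|\hat\phi_j|^2 = |\psi_j|^2 + |P_{V_N^\perp}\hat\phi_j|^2, \qquad q(\hat\phi_i, \hat\phi_j) = q(\psi_i, \psi_j) + q(P_{V_N^\perp}\hat\phi_i, P_{V_N^\perp}\hat\phi_j)
\]
combine with the previous step to give
\[
G := \bigl(\langle \psi_i, \psi_j\rangle\bigr) = I + O(\varepsilon^2), \qquad M := \bigl(q(\psi_i, \psi_j)\bigr) = \mathrm{diag}(v_1^F, \ldots, v_N^F) + O(\varepsilon^2).
\]
Since $\{\psi_j\}$ is a basis of $V_N$ and $V_N$ is the span of the first $N$ eigenvectors of $q$, the numbers $v_1, \ldots, v_N$ are precisely the generalized eigenvalues of the symmetric pencil $(M, G)$; standard matrix perturbation theory then delivers $|v_j - v_j^F| \leq C(C_1, N)\,\varepsilon^2$, which is the desired bound.

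The main obstacle, and the reason the error is $\varepsilon^2$ rather than $\varepsilon$, is the quadratic bootstrap in the displayed inequality above: it is precisely the spectral gap hypothesis $v_{N+1} \geq C_1$ that allows one to dominate $|\phi_h|^2$ by $C_1^{-1} q(\phi_h)$ and thereby extract $\sqrt{q(\phi_h)}$ on the right-hand side. Without this gap one would only obtain an $O(\varepsilon)$ estimate, insufficient for the stability applications in subsections 2.2--2.3.
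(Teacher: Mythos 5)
The paper does not prove this lemma at all: it is imported verbatim as Lemma 1.1 of Colbois--Colin de Verdi\`ere \cite{CC}, so there is no internal proof to compare with. Your argument is correct and is essentially the standard one behind that reference: the min-max upper bound $v_j\le v_j^F$, then the splitting $\phi=\phi_\ell+\phi_h$ along $V_N=\mathrm{span}(e_1,\dots,e_N)$, the choice $g=\phi_h$ in \eqref{qphig} combined with $q(\phi_h)\ge C_1|\phi_h|^2$ to get the quadratic smallness $|\phi_h|^2+q(\phi_h)\le C(C_1)\varepsilon^2|\phi|^2$, and finally the comparison of the pencil $\bigl(M,G\bigr)$ on $V_N$ (whose generalized eigenvalues are exactly $v_1,\dots,v_N$) with $\bigl(\mathrm{diag}(v_1^F,\dots,v_N^F),I\bigr)$. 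One detail worth writing out: the perturbation step picks up a term proportional to $\|\mathrm{diag}(v_1^F,\dots,v_N^F)\|\,\varepsilon^2$, so you need $v_N^F$ bounded; this follows by taking $g=\phi$ in \eqref{qphig}, which gives $v_N^F\le 2\varepsilon\le 2\varepsilon_0$, and then the constants indeed depend only on $C_1$ and $N$ (note also that the hypothesis $C_1\ge v_N$ is never actually used in your route --- only $v_{N+1}\ge C_1>0$ is).
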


Now we are ready to prove the first part of the main theorem. Consider the map $\Phi^\varepsilon: \mathbb R^{2N}_{>0} \to \mathbb R^N$ given by
 \begin{equation}\label{phi-eps}
 \{\theta,\theta_i,\mu_j|\ 1\leq i\leq N-1,\ 0\leq j\leq N-1\}\mapsto\{\lambda_i^\mathcal{D}(D,\varepsilon h_\varepsilon)\}_{i=1}^N,
 \end{equation}
where $h_\varepsilon =h_\varepsilon(\theta,\theta_i,\mu_i)$ is the metric in (\ref{hvarep}).
Apply Lemma \ref{l3.7} to the case $H=L^2(D,h_\varepsilon)$, $\mathcal{D}=H^1_0(D)$, $F=E_0^\varepsilon$ and $q(f)=\int_D|\nabla f|^2\mathrm{d}V_{h_\varepsilon}$. By Proposition \ref{lbd}, we may take $C_1=\beta$. The condition (\ref{qphig}) follows from the last part of Proposition \ref{p3.3}. Thus by Lemma \ref{l3.7} together with the convergences (\ref{f32}) and (\ref{f33}), one gets the following convergence
 \begin{equation*}
 \lim_{\varepsilon\to0}\lambda_i^\mathcal{D}(D,\varepsilon h_\varepsilon)=\lambda_i,\quad 1\leq i\leq N,
 \end{equation*}
where $(\lambda_1,\cdots,\lambda_N)=\Phi(\theta,\theta_i,\mu_j)$   was given in (\ref{Phi}).
Since $\Phi$ is a submersion when $0<\lambda_1<\lambda_2<\cdots<\lambda_N$, for $\varepsilon$ small enough  the map $\Phi^\varepsilon$ is stable at $(a_1,\cdots,a_N)$ when  restricted to a small ball centered at $\{\theta,\theta_i,\mu_j\} \in \Phi^{-1}(a_1,\cdots,a_N)$.
This completes the proof of the first part of Theorem \ref{mthm}.

\subsection{Proof of the second part of Theorem \ref{mthm}}

\noindent

Then, we prove the second part of Theorem \ref{mthm}.

\begin{proof}[The Proof of the second part of Theorem \ref{mthm}]

\noindent

Similar to the proof of the first part of Theorem \ref{mthm}, one only needs to consider the case of the disk $D$. Take $\varepsilon$ small enough such that
\[\area (D,\varepsilon h_\varepsilon)<\frac{A}{2}\]
where $\varepsilon h_\varepsilon$ is the metric constructed in Subsection \ref{con mt}. Then attach a rectangle $R=[0,a]\times [0,b]$ to $D$ along a small boundary interval $I$ of length $c$, as illustrated below

\includegraphics[scale=0.5]{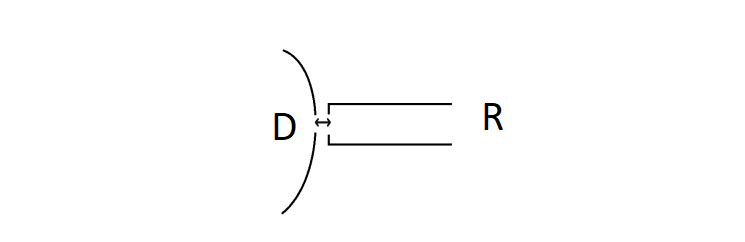}\\
where $a, b$ and $c$ will be carefully chosen below, with
\[ \area(X,\varepsilon h_\varepsilon)+ab=A.\]

Denote the resulting surface by $X_R=D\cup_I R$. Decompose
\[H^1_0(X_R)=\mathcal{H}_0\oplus\mathcal{H}_\infty,\]
where $\mathcal{H}_0=H^1_0(D)$ and
\[\mathcal{H}_\infty=\mathcal{H}_\infty(I)=\{\ f\in H^1_0(X_R)\ | \ \Delta f=0\text{ in }D\}.\]
By the proof of Lemma 5.2 in \cite{HW}, for any $M>0$, one has
\[ \int_D |\nabla f|^2 \mathrm{d} V_{\varepsilon h_\varepsilon} \ge M \int_D f^2 \mathrm{d}V_{\varepsilon h_\varepsilon},\ \forall f\in\mathcal{H}_\infty\]
for $c$ small enough.

Let $\lambda_1(c)$ be the first eigenvalue of the mixed boundary-valued Laplacian on $R$ with
\begin{center} Dirichlet on $\partial R\!\setminus\! I\;$ and $\;$ Neumann on $I$. \end{center}
By Lemma 5.2 in \cite{HW}, for  $c$  small enough one has
\[\lambda_1(c) \ge \frac{\lambda^\mathcal D_1(R)}{2}=\frac{\pi^2}{2}(\frac{1}{a^2}+\frac{1}{b^2}).\]
Moreover, one can make $\lambda^\mathcal{D}_1(R) \ge 2M$ by carefully choosing $a$ and $b$.

So one can make
\[ \int_{X_R} |\nabla f|^2 \mathrm{d} V =\int _D |\nabla f|^2 \mathrm{d} V_{\varepsilon h_\varepsilon} + \int_R |\nabla f|^2 \mathrm{d} V \ge M \int_{X_R} f^2 \mathrm{d} V,\ \forall f\in \mathcal{H}_\infty,\]
by carefully choosing $a$, $b$, $c$. By Theorem 2.3 in \cite{HW}, for any integer $k \ge 1$, one has
\begin{equation}\label{con-XR-X}
\lambda_k^\mathcal{D} (X_R) \to \lambda_k^\mathcal{D} (D,\varepsilon h_\varepsilon)
\end{equation}
as $M\to \infty$. One also can slightly change the metric on $X_R$ such that the new metric is smooth with the same area and the convergence (\ref{con-XR-X}) still holds. Then Theorem \ref{mthm} follows from the fact that $\Phi^\varepsilon$ (defined in (\ref{phi-eps})) is stable at $(a_1,\cdots,a_N)$ when  restricted to a small ball centered at $\{\theta,\theta_i,\mu_j\} \in \Phi^{-1}(a_1,\cdots,a_N)$.
\end{proof}

\section{Appendix}

\par The convergence of $\lambda_k^\mathcal{N}(p,\varepsilon)$ to $\lambda^\mathcal{D}_k(X,f(p))$ is just a corollary of the following theorem.

\begin{theorem}\label{ConN}Let $(M,g)$ be a compact Riemannian manifold with boundary and $\{\lambda_k^\mathcal{D}\}_{k=1}^\infty$ be the eigenvalues of the Dirichlet Laplacian on $(M,g)$. For $p\in \mathrm{int} M$, denote $M_\varepsilon=M\setminus B_\varepsilon(p)$ where $B_\varepsilon(p)$ is the $\varepsilon$-geodesic ball with center $p$. Let $\{\lambda_k(\varepsilon)\}_{k=1}^\infty$ be the eigenvalues of the mixed boundary-value Laplacian on $M_\varepsilon$ with
 \begin{center} Dirichlet condition on $\partial M$ and Neumann condition on $\partial B_\varepsilon(p)$. \end{center}
Then $\lim_{\varepsilon\to 0}\lambda_k(\varepsilon)=\lambda_k^\mathcal{D}$ for all $1\leq k<\infty$.
\end{theorem}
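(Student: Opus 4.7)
The plan is to prove $\lim_{\varepsilon\to 0}\lambda_k(\varepsilon)=\lambda_k^\mathcal{D}$ via a two-sided min-max argument, comparing the admissible spaces $H^1_0(M)$ (for the Dirichlet problem) and $\mathcal{H}_\varepsilon:=\{f\in H^1(M_\varepsilon):f|_{\partial M}=0\}$ (for the mixed problem). For the upper bound $\limsup_{\varepsilon\to 0}\lambda_k(\varepsilon)\leq\lambda_k^\mathcal{D}$, I would observe that restriction gives an inclusion $H^1_0(M)\hookrightarrow\mathcal{H}_\varepsilon$ since the mixed problem imposes no condition on the new boundary $\partial B_\varepsilon(p)$, so any Dirichlet test function automatically qualifies. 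Applying the min-max principle to the $k$-dimensional span of the first $k$ Dirichlet eigenfunctions $\phi_1,\dots,\phi_k$, and using $\int_{B_\varepsilon(p)}\phi_i^2\,\mathrm{d}V=O(\varepsilon^n)$ together with $\int_{B_\varepsilon(p)}|\nabla\phi_i|^2\,\mathrm{d}V=O(\varepsilon^n)$ (by smoothness of the $\phi_i$), the upper bound follows at once.

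For the reverse inequality $\liminf_{\varepsilon\to 0}\lambda_k(\varepsilon)\geq\lambda_k^\mathcal{D}$, the idea is to transport mixed eigenfunctions into $H^1_0(M)$ via a capacity cutoff. Choose $\eta_\varepsilon\in C^\infty(M)$ with $\eta_\varepsilon\equiv 0$ on $B_\varepsilon(p)$, $\eta_\varepsilon\equiv 1$ outside $B_{r_0}(p)$ for a fixed small $r_0>0$, and $\|\nabla\eta_\varepsilon\|_{L^2(M)}^2\to 0$ as $\varepsilon\to 0$. The existence of such a cutoff reflects the fact that a point has zero capacity in dimension $n\geq 2$: for $n=2$ take the logarithmic cutoff $\eta_\varepsilon(x)=\log(d(x,p)/\varepsilon)/\log(r_0/\varepsilon)$ on the annulus, giving $\|\nabla\eta_\varepsilon\|_{L^2}^2=O(1/|\log\varepsilon|)$, and for $n\geq 3$ use a radial capacitary potential giving $O(\varepsilon^{n-2})$. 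Given $L^2(M_\varepsilon)$-orthonormal mixed eigenfunctions $\phi_1(\varepsilon),\dots,\phi_k(\varepsilon)$, set $\tilde\phi_j:=\eta_\varepsilon\phi_j(\varepsilon)$ extended by zero to $B_\varepsilon(p)$; then $\tilde\phi_j\in H^1_0(M)$ and the $\{\tilde\phi_j\}$ are linearly independent for $\varepsilon$ small (since the Gram matrix converges to the identity). For any $f=\sum c_j\phi_j(\varepsilon)$ with $\sum c_j^2=1$, the pointwise inequality $|\nabla(\eta f)|^2\leq(1+\delta)\eta^2|\nabla f|^2+(1+\delta^{-1})f^2|\nabla\eta|^2$ yields
\[
R_M(\tilde f)\leq\frac{(1+\delta)\lambda_k(\varepsilon)+(1+\delta^{-1})\int_{M_\varepsilon}f^2|\nabla\eta_\varepsilon|^2\,\mathrm{d}V}{1-\int_{B_{r_0}(p)}f^2\,\mathrm{d}V},
\]
and min-max applied to $\mathrm{span}(\tilde\phi_1,\dots,\tilde\phi_k)$ gives $\lambda_k^\mathcal{D}\leq$ the right-hand side; the conclusion then follows by letting $\varepsilon\to 0$, then $r_0\to 0$, then $\delta\to 0$.

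The main analytic task, and the principal obstacle, is to show that the two error terms above tend to zero uniformly for $f$ in the unit $L^2(M_\varepsilon)$-sphere of $\mathrm{span}(\phi_j(\varepsilon))$. Both bounds reduce to a uniform $L^\infty$-estimate $\|\phi_j(\varepsilon)\|_{L^\infty(M_\varepsilon)}\leq C$: from such an estimate one gets $\int f^2|\nabla\eta_\varepsilon|^2\leq C^2k^2\|\nabla\eta_\varepsilon\|_{L^2}^2\to 0$ and $\int_{B_{r_0}(p)}f^2\leq C^2k^2|B_{r_0}(p)|\to 0$. The subtle point is uniformity of this $L^\infty$-bound: while the eigenvalue bound $\lambda_k(\varepsilon)\leq C$ is already in hand from the upper bound, the second fundamental form of $\partial B_\varepsilon(p)$ blows up like $1/\varepsilon$, so standard elliptic boundary-regularity constants a priori depend delicately on $\varepsilon$. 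I would resolve this by doubling $M_\varepsilon$ across $\partial B_\varepsilon(p)$ through Neumann reflection in geodesic normal coordinates centered at $p$, converting the Neumann boundary estimate into an interior Moser iteration on a slight enlargement of $M_\varepsilon$; interior Moser iteration depends only on uniform ellipticity and $L^\infty$-bounds on the coefficients of the doubled metric, both of which can be arranged to hold uniformly in $\varepsilon$.
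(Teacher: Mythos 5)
Your two-sided min-max strategy is a genuinely different route from the paper (which follows Rauch--Taylor: a uniformly bounded harmonic extension $P_\varepsilon:H^1(M_\varepsilon)\to H^1_0(M)$, strong convergence $F(\Delta_\varepsilon)u_\varepsilon\to F(\Delta_M)u$, and equality of ranks of spectral projections over intervals --- no cutoffs and no pointwise bounds anywhere). The upper bound half of your argument is fine. The problem is exactly where you locate it: the uniform $L^\infty$ bound $\|\phi_j(\varepsilon)\|_{L^\infty(M_\varepsilon)}\le C$, and the justification you offer does not close it. Reflecting across $\partial B_\varepsilon(p)$ and invoking \emph{interior} Moser iteration cannot give an $\varepsilon$-independent sup bound by itself: in a fixed normal-coordinate chart the angular metric coefficients on the collar are of size $\varepsilon^2$, so ``uniform ellipticity'' only holds after rescaling by $\varepsilon$; and the scale-invariant form of the interior estimate controls $\sup|\phi|$ near $\partial B_\varepsilon$ by $C\bigl(\varepsilon^{-n}\int_{B_{C\varepsilon}(p)\setminus B_\varepsilon(p)}\phi^2\,\mathrm{d}V\bigr)^{1/2}$, i.e.\ by $\varepsilon^{-n/2}\|\phi\|_{L^2}$ against the global normalization. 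Uniformity would require knowing a priori that the $L^2$ mass of $\phi_j(\varepsilon)$ in an $O(\varepsilon)$-neighborhood of $p$ is $O(\varepsilon^n)$, which is essentially the non-concentration statement you are trying to prove. Note also that in dimension $2$ you cannot dodge the sup bound by H\"older/Sobolev: the logarithmic cutoff has $\|\nabla\eta_\varepsilon\|_{L^p}\to\infty$ for every $p>2$, so the error term $\int f^2|\nabla\eta_\varepsilon|^2$ really does need pointwise (or Hardy-type) control of $f$ near $p$.

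The estimate you want is true, but the correct route is a \emph{global} Moser iteration on $M_\varepsilon$ for the mixed eigenvalue problem (the Neumann portion of the boundary is harmless for the choice of test functions, and the Dirichlet portion is handled by zero extension), which requires a Sobolev inequality on $H^1(M_\varepsilon)$ with constant independent of $\varepsilon$. That uniform Sobolev constant is not automatic; the standard way to get it is via an extension operator $H^1(M_\varepsilon)\to H^1(M)$ bounded uniformly in $\varepsilon$ --- for instance the harmonic extension into $B_\varepsilon(p)$, which is precisely the operator $P_\varepsilon$ the paper introduces (and whose uniform bound the paper also only asserts). So either supply this uniform extension/Sobolev step and run the iteration globally, or abandon pointwise bounds altogether and argue as in the paper through resolvent convergence and rank comparison of spectral projections; as written, the final step of your proposal is a gap.
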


\par Before starting the proof, we make some notes. Let $\Delta_M$ be the Friedrich extension of the quadratic form $q(f)=\int_M|\nabla f|^2\mathrm{d}V_g$ with domain $H^1_0(M)$ and $\Delta_\varepsilon$ be the Friedrich extension of the quadratic form $q_\varepsilon(f)=\int_{M_\varepsilon}|\nabla f|^2\mathrm{d}V_g$ with domain
\[D_\varepsilon=\{f\in H^1(M_\varepsilon)|\ f|_{\partial M}=0\}.\]
For $u\in H^1_0(M)$, let $u_\varepsilon=u|_{M_\varepsilon}$ and $P_\varepsilon u_\varepsilon$ be the Harmonic extension  of $u$ with
 \begin{equation*}
 \begin{cases}
 P_\varepsilon u_\varepsilon=u&\text{ on }M_\varepsilon,\\
 \Delta(P_\varepsilon u_\varepsilon)=0&\text{ on }B_\varepsilon(p).
 \end{cases}
 \end{equation*}
Then there exist constants $C>0$ and $\delta>0$ such that
 \begin{equation*}
 \|P_\varepsilon u_\varepsilon\|_{H^1(M)}\leq C\|u_\varepsilon\|_{H^1(M_\varepsilon)}
 \end{equation*}
for all $u\in H^1_0(M)$ and $0<\varepsilon<\delta$.

For any  bounded open interval  $I\subset\mathbb{R}$ whose endpoints do not belong to $\sigma(\Delta_M)$, we let $\Pi$ and $\Pi_\varepsilon$ be the spectral projections of $\Delta_M$ and $\Delta_\varepsilon$ on this interval. Then $\mathrm{rank}\Pi=\dim(\mathrm{range}\Pi)$ is the number of eigenvalues of $\Delta_M$ in $I$.
Theorem \ref{ConN} is a consequence of
\begin{proposition}\label{rank}When $\varepsilon$ is small enough, $\mathrm{rank}\Pi=\mathrm{rank}\Pi_\varepsilon$.
\end{proposition}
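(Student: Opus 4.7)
The plan is to prove the stronger fact that for each fixed $k$, $\lambda_k(\varepsilon) \to \lambda_k^{\mathcal D}$ as $\varepsilon \to 0$, from which Proposition \ref{rank} is immediate: since $I$ is bounded and $\partial I$ avoids $\sigma(\Delta_M)$, for $\varepsilon$ small no eigenvalue $\lambda_k(\varepsilon)$ can be trapped at $\partial I$, so the count of eigenvalues of $\Delta_\varepsilon$ lying in $I$ stabilizes to the $\varepsilon = 0$ count, giving $\mathrm{rank}\,\Pi_\varepsilon = \mathrm{rank}\,\Pi$. The eigenvalue convergence is split into the two min-max estimates.

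For the upper bound $\limsup_\varepsilon \lambda_k(\varepsilon) \le \lambda_k^{\mathcal D}$, I would take the span $V_k \subset H^1_0(M)$ of the first $k$ Dirichlet eigenfunctions of $\Delta_M$ and restrict to $M_\varepsilon$ to obtain a $k$-dimensional subspace of $D_\varepsilon$ (linear independence is preserved for small $\varepsilon$ by unique continuation). Compactness of the unit sphere in the fixed finite-dimensional space $V_k$ together with continuity of the integrals over $B_\varepsilon$ gives the uniform decay
\[
\sup_{u \in V_k,\; \|u\|_{L^2(M)}=1}\bigl(\|u\|_{L^2(B_\varepsilon)} + \|\nabla u\|_{L^2(B_\varepsilon)}\bigr) \longrightarrow 0 \qquad (\varepsilon \to 0),
\]
so feeding the restricted subspace into the min-max characterization of $\lambda_k(\varepsilon)$ yields the upper bound.

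For the lower bound $\liminf_\varepsilon \lambda_k(\varepsilon) \ge \lambda_k^{\mathcal D}$, let $\phi_1^\varepsilon,\ldots,\phi_k^\varepsilon$ be an $L^2$-orthonormal system of first eigenfunctions of $\Delta_\varepsilon$. The upper bound ensures $\|\phi_j^\varepsilon\|_{H^1(M_\varepsilon)}$ is uniformly bounded, and the hypothesis $\|P_\varepsilon u\|_{H^1(M)} \le C\|u\|_{H^1(M_\varepsilon)}$ provides uniform $H^1(M)$-control of the extensions $P_\varepsilon \phi_j^\varepsilon \in H^1_0(M)$. Sobolev embedding yields $\|P_\varepsilon \phi_j^\varepsilon\|_{L^2(B_\varepsilon)} = O(\mathrm{vol}(B_\varepsilon)^{1/n}) \to 0$, so the Gram matrix of $\{P_\varepsilon \phi_j^\varepsilon\}$ converges to the identity and linear independence persists. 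The crucial estimate is the decay of the added Dirichlet energy $\|\nabla P_\varepsilon \phi_j^\varepsilon\|_{L^2(B_\varepsilon)}^2 = o(1)$; plugging $\mathrm{span}\{P_\varepsilon \phi_j^\varepsilon\}$ into the min-max for $\lambda_k^{\mathcal D}$ then gives $\lambda_k^{\mathcal D} \le \lambda_k(\varepsilon) + o(1)$.

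The main technical obstacle is establishing the harmonic-extension energy decay $\|\nabla P_\varepsilon \phi_j^\varepsilon\|_{L^2(B_\varepsilon)}^2 \to 0$. My approach would be to construct an explicit competitor to the Dirichlet minimizer using a radial cutoff in geodesic normal coordinates around $p$ (logarithmic in dimension $n = 2$, a power of the radius when $n \ge 3$), reducing the estimate to controlling the trace $\|\phi_j^\varepsilon\|_{H^{1/2}(\partial B_\varepsilon)}$; this in turn is handled by the trace inequality on the annular shell $B_{2\varepsilon}\setminus B_\varepsilon$ together with the uniform $H^1$-bound on $\phi_j^\varepsilon$. All remaining steps are routine applications of min-max and Sobolev embedding.
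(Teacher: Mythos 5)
Your reduction (eigenvalue convergence for every fixed $k$ implies stability of the eigenvalue count in $I$, since $\partial I$ misses $\sigma(\Delta_M)$) and your upper-bound half ($\limsup_\varepsilon \lambda_k(\varepsilon)\le\lambda_k^{\mathcal D}$ via restriction of the span of the first $k$ Dirichlet eigenfunctions) are sound. The gap is in the lower bound, precisely at the step you flag as the "main technical obstacle": the decay $\|\nabla P_\varepsilon\phi_j^\varepsilon\|_{L^2(B_\varepsilon)}^2\to 0$ does \emph{not} follow from the trace inequality on the shell $B_{2\varepsilon}\setminus B_\varepsilon$ together with the uniform $H^1$-bound on $\phi_j^\varepsilon$. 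Those two ingredients only give that the (suitably scaled) $H^{1/2}(\partial B_\varepsilon)$ trace norm is \emph{bounded}, hence that the harmonic-extension energy is $O(1)$ -- which is exactly the estimate $\|P_\varepsilon u_\varepsilon\|_{H^1(M)}\le C\|u_\varepsilon\|_{H^1(M_\varepsilon)}$ already stated in the paper, and it is useless in the min-max (it yields $\lambda_k^{\mathcal D}\le\lambda_k(\varepsilon)+C$). Worse, smallness genuinely cannot be extracted from these hypotheses alone: in dimension $2$ the Dirichlet energy of the harmonic extension of a boundary trace is conformally invariant, equal to $\pi\sum_m |m|\,|a_m|^2$ for trace Fourier coefficients $a_m$, independently of the radius $\varepsilon$. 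Taking the trace $\cos\theta$ on $\partial B_\varepsilon$ (realized by $\chi(|x|/\varepsilon)\cos\theta$ on the shell, which has $H^1$ norm $O(1)$) gives extension energy $\pi$ for every $\varepsilon$. So to make your scheme work you must use that the $\phi_j^\varepsilon$ solve the eigenvalue equation with Neumann condition on the shrinking circle, and extract from that an $o(1)$ bound on the oscillation of $\phi_j^\varepsilon$ along $\partial B_\varepsilon$; this is delicate because the relevant elliptic estimates degenerate as $\varepsilon\to 0$ (naive rescaling of the annulus only gives oscillation $O(\|\phi_j^\varepsilon\|_\infty)$, again not small). As written, the lower bound is not proved.

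This is also where your route diverges from the paper's. The paper never proves any uniform energy decay for $\varepsilon$-dependent eigenfunctions: it invokes the Rauch--Taylor result (Lemma \ref{con}), i.e.\ strong convergence $F(\Delta_\varepsilon)u_\varepsilon\to F(\Delta_M)u$ for fixed $u$, whose proof only needs $\|\phi\|_{H^1(B_\varepsilon)}\to 0$ for \emph{fixed} test functions $\phi\in H^1_0(M)$ (trivial by dominated convergence), and then compares the spectral projections directly: $\rank\Pi_\varepsilon\ge\rank\Pi$ by applying $\Pi_\varepsilon$ to a fixed orthonormal basis of $\operatorname{range}\Pi$, and $\rank\Pi_\varepsilon\le\rank\Pi$ by a contradiction argument using the uniform $H^1$ bound on $P_{\varepsilon_n}v_n$, Rellich compactness, and Lemma \ref{con} again. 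If you want to keep your quantitative min-max approach, you need to supply the missing eigenfunction-level estimate near the hole (e.g.\ via capacity/matched-asymptotics arguments); otherwise the projection/compactness argument is the cheaper path.
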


\begin{proof}[Proof of Theorem \ref{ConN}]
First take $I=[0, \lambda_k^{\mathcal D}+\frac 1m]$ with $m$ large enough so that the next Dirichlet eigenvalue is greater than $\lambda_k^{\mathcal D}+\frac 1m$. Then Proposition \ref{rank} tells us that as $\varepsilon \to 0$, the number of eigenvalues of $\Delta_\varepsilon$ in the interval $I$ is the same as the Dirichlet Laplacian $\Delta_M$. Similarly one may apply the argument to smaller and smaller  intervals near each $\lambda_k^\mathcal D$ to conclude that  $\Delta_\varepsilon$ and $\Delta_M$ has the same number of eigenvalues in each of these intervals. The conclusion follows.
\end{proof}

Although the settings are different, the proof of Proposition \ref{rank} is very similar to that of Theorem 1.5 in \cite{RT}.
For completeness we include the proof here. The major difference is that we  use the following lemma, which is a consequence of Theorem 3.1 in \cite{RT}:
\begin{lemma}[\cite{RT}]\label{con}Let $F$ be a bounded Borel function on $(0,\infty)$ which is continuous on a neighborhood of $\sigma(\Delta_M)$. Then $F(\Delta_\varepsilon)u_\varepsilon\to F(\Delta_M)u$ in $L^2(M)$ as $\varepsilon\to0$ for all $u\in L^2(M)$.
\end{lemma}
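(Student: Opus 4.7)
The plan is to first establish strong resolvent convergence $(\Delta_\varepsilon - z)^{-1} u_\varepsilon \to (\Delta_M - z)^{-1} u$ in $L^2(M)$ for every $z \in \mathbb{C}\setminus[0,\infty)$, and then promote this by functional calculus to the class of $F$ allowed in the statement. Throughout I identify $L^2(M_\varepsilon)$ with its zero-extension inside $L^2(M)$; since $u_\varepsilon = u|_{M_\varepsilon}$ then converges to $u$ in $L^2(M)$ (because $|B_\varepsilon(p)|\to 0$) and $\|F(\Delta_\varepsilon)\|\le \|F\|_\infty$ uniformly, the lemma reduces to strong operator convergence $F(\Delta_\varepsilon)\to F(\Delta_M)$.

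For the resolvent step I would verify Mosco convergence of the quadratic forms $q_\varepsilon$ to $q$. The $\Gamma$-$\limsup$ side is easy: for $v \in H^1_0(M)$, the restrictions $v|_{M_\varepsilon}$ lie in $D_\varepsilon$, converge to $v$ in $L^2$, and $q_\varepsilon(v|_{M_\varepsilon}) \to q(v)$ by dominated convergence. The $\Gamma$-$\liminf$ side is the delicate half: given $v_\varepsilon \in D_\varepsilon$ with $v_\varepsilon \rightharpoonup v$ weakly in $L^2(M)$ and $\liminf q_\varepsilon(v_\varepsilon) < \infty$, I would apply the harmonic extensions $P_\varepsilon v_\varepsilon \in H^1_0(M)$ supplied in the excerpt. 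The bound $\|P_\varepsilon v_\varepsilon\|_{H^1(M)} \le C\|v_\varepsilon\|_{H^1(M_\varepsilon)}$ provides weak $H^1_0(M)$-compactness; a subsequential limit $\tilde v$ must coincide with $v$ because $P_\varepsilon v_\varepsilon = v_\varepsilon$ off the shrinking set $B_\varepsilon(p)$. Weak lower semicontinuity of the Dirichlet energy then reduces the matter to the capacitary estimate
\[
\int_{B_\varepsilon(p)} |\nabla P_\varepsilon v_\varepsilon|^2 \, \mathrm{d}V_g \to 0,
\]
which exploits that a point on a surface has zero logarithmic capacity.

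By the Mosco correspondence this yields strong resolvent convergence, and a standard Stone-Weierstrass argument (together with the uniform bound $\|G(\Delta_\varepsilon)\|\le \|G\|_\infty$) promotes it to $G(\Delta_\varepsilon) \to G(\Delta_M)$ strongly for every bounded continuous $G$ on $[0,\infty)$. To pass to a Borel $F$ that is merely continuous on an open neighborhood $U$ of the discrete set $\sigma(\Delta_M)$, I would pick a bounded continuous $G$ with $G = F$ on $U$, so $G(\Delta_M) = F(\Delta_M)$. Since $|F-G|\le \|F-G\|_\infty \mathbf{1}_{[0,\infty)\setminus U}$ and the closed sets $\sigma(\Delta_M)$ and $[0,\infty)\setminus U$ are disjoint, Urysohn supplies a continuous $\chi$ on $[0,\infty)$ with $\chi\equiv 1$ on $[0,\infty)\setminus U$ and $\chi\equiv 0$ on a neighborhood of $\sigma(\Delta_M)$. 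Functional calculus gives
\[
\|(F-G)(\Delta_\varepsilon) u_\varepsilon\|_{L^2} \le \|F-G\|_\infty \, \|\chi(\Delta_\varepsilon) u_\varepsilon\|_{L^2},
\]
and the right-hand side tends to zero because $\chi$ is continuous and vanishes on $\sigma(\Delta_M)$, so $\chi(\Delta_\varepsilon) u_\varepsilon \to \chi(\Delta_M) u = 0$ by the continuous case. Combining this with $G(\Delta_\varepsilon) u_\varepsilon \to G(\Delta_M) u = F(\Delta_M) u$ closes the argument.

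The main obstacle is the $\Gamma$-$\liminf$ step, specifically the capacitary estimate controlling the gradient energy of $P_\varepsilon v_\varepsilon$ absorbed into the shrinking puncture. In dimension two this is borderline, matching the logarithmic convergence rate $O(1/|\log\varepsilon|)$ for Dirichlet eigenvalues already invoked in the main body of the paper; the clean way to handle it is to write the extension as the Poisson integral on $B_\varepsilon(p)$ and estimate its Dirichlet integral by the trace of $v_\varepsilon$ on $\partial B_\varepsilon(p)$ tested against the logarithmic capacity of a point.
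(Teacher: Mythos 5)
Your overall architecture --- strong resolvent convergence followed by the Urysohn/cutoff promotion to bounded Borel $F$ continuous near $\sigma(\Delta_M)$ --- is sound, and the promotion step is correct; it is essentially the argument of Theorem 1.2 in \cite{RT}, which is what the paper itself relies on (the paper gives no independent proof of this lemma, deducing it from Theorem 3.1 of \cite{RT}; the route there to resolvent convergence is a direct weak-compactness identification of the limit of $(1+\Delta_\varepsilon)^{-1}u_\varepsilon$ using $P_\varepsilon$, not Mosco convergence). However, your Mosco $\liminf$ step contains a genuine error: the claimed capacitary estimate $\int_{B_\varepsilon(p)}|\nabla P_\varepsilon v_\varepsilon|^2\,\mathrm{d}V_g\to 0$ is false for general sequences $v_\varepsilon\in D_\varepsilon$ that are merely bounded in $H^1(M_\varepsilon)$. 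The Dirichlet energy of the harmonic extension inside $B_\varepsilon(p)$ is comparable to the scale-invariant homogeneous $H^{1/2}$ seminorm of the trace on $\partial B_\varepsilon(p)$, which the zero logarithmic capacity of a point does not control. Concretely, in normal coordinates at $p$ take $v_\varepsilon(r,\theta)=\chi(r/\varepsilon)\cos\theta$ with $\chi(1)=1$ and $\chi(t)\equiv 0$ for $t\ge 2$: then $\|v_\varepsilon\|_{H^1(M_\varepsilon)}=O(1)$ and $v_\varepsilon\to 0$ in $L^2(M)$, but the harmonic extension is $(r/\varepsilon)\cos\theta$ on $B_\varepsilon(p)$, whose Dirichlet energy there is $\approx\pi$ for every $\varepsilon$. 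Zero capacity controls radial logarithmic cutoffs, not the oscillation of arbitrary bounded-energy traces, so the decomposition $q_\varepsilon(v_\varepsilon)=\int_M|\nabla P_\varepsilon v_\varepsilon|^2-\int_{B_\varepsilon(p)}|\nabla P_\varepsilon v_\varepsilon|^2$ combined with weak lower semicontinuity does not deliver the $\liminf$ inequality as you wrote it.

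The gap is repairable without the false estimate. Use $P_\varepsilon v_\varepsilon$ only to place the weak limit in the correct form domain: the uniform bound $\|P_\varepsilon v_\varepsilon\|_{H^1(M)}\le C\|v_\varepsilon\|_{H^1(M_\varepsilon)}$ (which, note, you need for all $v_\varepsilon\in D_\varepsilon$, not only for restrictions of $H^1_0(M)$ functions as literally stated in the paper) gives $P_\varepsilon v_\varepsilon\rightharpoonup v$ in $H^1_0(M)$, hence $v\in H^1_0(M)$. Then obtain the $\liminf$ by localizing away from the puncture instead of subtracting: for fixed $\delta>0$ and $\varepsilon<\delta$ one has $q_\varepsilon(v_\varepsilon)\ge\int_{M\setminus B_\delta(p)}|\nabla v_\varepsilon|^2\,\mathrm{d}V_g$, and on $M\setminus B_\delta(p)$ the functions $v_\varepsilon$ coincide with $P_\varepsilon v_\varepsilon$, so weak lower semicontinuity on the fixed domain gives $\liminf_\varepsilon q_\varepsilon(v_\varepsilon)\ge\int_{M\setminus B_\delta(p)}|\nabla v|^2\,\mathrm{d}V_g$; now let $\delta\to 0$. (This is the same mechanism that makes the direct resolvent argument work: there the ball contribution appears only paired against a fixed test function $\phi$, and $\int_{B_\varepsilon(p)}\langle\nabla P_\varepsilon v_\varepsilon,\nabla\phi\rangle\,\mathrm{d}V_g\to 0$ because $\|\nabla\phi\|_{L^2(B_\varepsilon(p))}\to 0$, with no smallness of $\|\nabla P_\varepsilon v_\varepsilon\|_{L^2(B_\varepsilon(p))}$ required.) With this replacement your proof goes through; the remaining steps --- the reduction to strong operator convergence, the passage from resolvent convergence to bounded continuous $G$, and the Urysohn comparison handling Borel $F$ continuous only near $\sigma(\Delta_M)$ --- are fine.
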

\begin{proof}[Proof of Lemma \ref{rank}]
First take an orthonormal basis $\{u_1,\cdots,u_k\}$ of $\mathrm{range}\Pi$.
Then by Lemma \ref{con}, for $\varepsilon$ small enough, $\|\Pi_\varepsilon (u_i)_\varepsilon-u_i\|_{L^2(M)}<\frac{1}{2}$ for all $1\leq i\leq k$. This implies that $\{\Pi_\varepsilon (u_i)_\varepsilon\}_{i=1}^k$ is  a linear independent set and thus $\rank\Pi_\varepsilon\geq\rank\Pi$ for $\varepsilon$ small enough.

To prove the reverse inequality, we argue by contradiction. Suppose that there exists a sequence $ \varepsilon_n \to 0$ such that $\rank\Pi_{\varepsilon_n}>\rank\Pi$ for all $n$.
Choose $v_n\in\mathrm{range}\Pi_{\varepsilon_n}$ with $\|v_n\|_{L^2(M_{\varepsilon_n})}=1$ and $v_n\perp\mathrm{range}\Pi$. Since $I$ is bounded,   $\{P_{\varepsilon_n}v_n\}_{n=1}^\infty$ is a bounded set in $H^1_0(M)$. By Rellich's theorem, we can assume that $P_{\varepsilon_n}v_n\to w$ in $L^2(M)$ as $n\to\infty$. Then $\|w\|_{L^2(M)}\geq 1$ and
 \begin{equation*}
 \langle w,u_i\rangle_{L^2(M)}=\lim_{n\to\infty}\langle P_{\varepsilon_n}v_n,u_i\rangle_{L^2(M)}=\lim_{n\to\infty}\langle P_{\varepsilon_n}v_n,u_i\rangle_{L^2(M\setminus M_{\varepsilon_n})}=0
 \end{equation*}
for all $1\leq i\leq k$. So $w\perp\mathrm{range}\Pi$. On the other hand, by Lemma \ref{con},
 \begin{equation*}
 \begin{aligned}
 &\lim_{n\to\infty}\|P_{\varepsilon_n}v_n-\Pi w\|_{L^2(M)}=\lim_{n\to\infty}\|P_{\varepsilon_n}v_n-\Pi_{\varepsilon_n}w_{ \varepsilon_n}\|_{L^2(M)}\\
 =&\lim_{n\to\infty}(\|\Pi_{\varepsilon_n}v_n-\Pi_{\varepsilon_n}w_{ \varepsilon_n}\|_{L^2(M_{\varepsilon_n})}+\|P_{\varepsilon_n}v_n\|_{L^2(M\setminus M_{\varepsilon_n})})\\
 \leq&\lim_{n\to\infty}(\|v_n-w\|_{L^2(M_{\varepsilon_n})}+\|P_{\varepsilon_n}v_n -w\|_{L^2(M)}+\|w\|_{L^2(M\setminus M_{\varepsilon_n})})\\
 =&0.
 \end{aligned}
 \end{equation*}
So $w=\Pi w$ which implies $w\in\mathrm{range}\Pi$, a contradiction.
\end{proof}

\textbf{Declarations.} The author has not disclosed any competing interests.

\end{document}